\newcommand{\R}{\mathbb{R}}
\newcommand{\Z}{\mathbb{Z}}
\newcommand\indep{\perp\!\!\!\perp}
\newcommand{\eps}{\varepsilon}
\newcommand{\exc}{\mathrm{exc}}
\newcommand{\Walks}{\mathfrak W}
\newcommand{\Coals}{\mathfrak C}
\newcommand{\Perms}{\mathfrak S}
\newcommand{\Steps}{A}
\renewcommand{\P}{\Prob}
\DeclareMathOperator{\idf}{\mathds{1}}
\DeclareMathOperator{\Prob}{\mathbb{P}}
\DeclareMathOperator{\pat}{pat}
\DeclareMathOperator{\Leb}{Leb}
\DeclareMathOperator{\Id}{Id}
\DeclareMathOperator{\wcp}{WC}
\DeclareMathOperator{\tree}{Tr}
\DeclareMathOperator{\cpbp}{CP}
\DeclareMathOperator{\bow}{OW}
\DeclareMathOperator{\bobp}{OP}
\DeclareMathOperator{\Perm}{Perm}
\newcommand{\Maps}{\mathfrak m}
\newcommand{\conti}[1]{{\bm{\mathscr #1}}}
\newtheorem{observation}[theorem]{Observation}
\title{Scaling and local limits of Baxter permutations through coalescent-walk processes} 
\titlerunning{Scaling and local limits of Baxter permutations}
\author{Jacopo Borga}{Institut für Mathematik,
	Universität Zürich, Switzerland \and \url{http://www.jacopoborga.com} }{jacopo.borga@math.uzh.ch}{https://orcid.org/0000-0002-2805-7928}{}
\author{Mickaël Maazoun}{Université de Lyon, ENS de Lyon, Unité de mathématiques pures et appliquées, France \and \url{http://perso.ens-lyon.fr/mickael.maazoun/}}{mickael.maazoun@ens-lyon.fr}{https://orcid.org/0000-0001-8852-2345}{}
\authorrunning{Jacopo Borga and Mickaël Maazoun}
\keywords{Local and scaling limits, permutations, planar maps, random walks in cones.}
\begin{document}

\maketitle

\begin{abstract}
Baxter permutations, plane bipolar orientations, and a specific family of walks in the non-negative quadrant are well-known to be related to each other through several bijections. We introduce a further new family of discrete objects, called \emph{coalescent-walk processes}, that are fundamental for our results. We relate these new objects with the other previously mentioned families introducing some new bijections. 

We prove joint Benjamini--Schramm convergence (both in the annealed and quenched sense) for uniform objects in the four families. Furthermore, we explicitly construct a new fractal random measure of the unit square,
called the \emph{coalescent Baxter permuton} and we show that it is the scaling limit (in the permuton sense) of uniform Baxter permutations. 

To prove the latter result, we study the scaling limit of the associated random coalescent-walk processes. We show that they converge in law to a \emph{continuous random coalescent-walk process} encoded by a perturbed version of the Tanaka stochastic differential equation. This result has
connections (to be explored in future projects) with the results of 
Gwynne, Holden, Sun (2016) on scaling limits (in the Peanosphere topology) of plane bipolar triangulations. 

We further prove some results that relate the limiting objects of the four families to each other, both in the local and scaling limit case.
\end{abstract}

\begin{figure}[htbp]
	\begin{minipage}[c]{0.7\textwidth}
		\centering
		\includegraphics[scale=0.35]{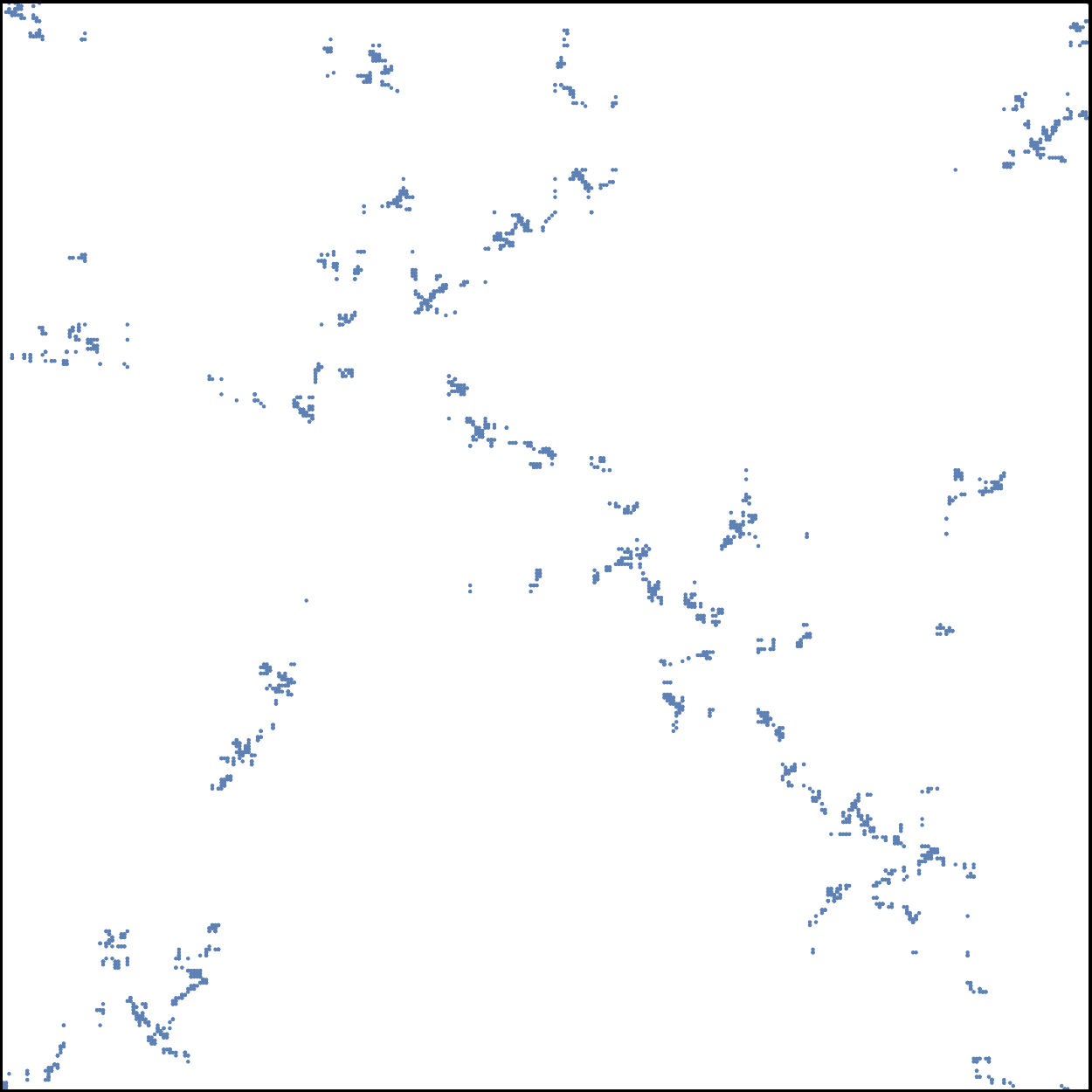}
		\includegraphics[scale=0.35]{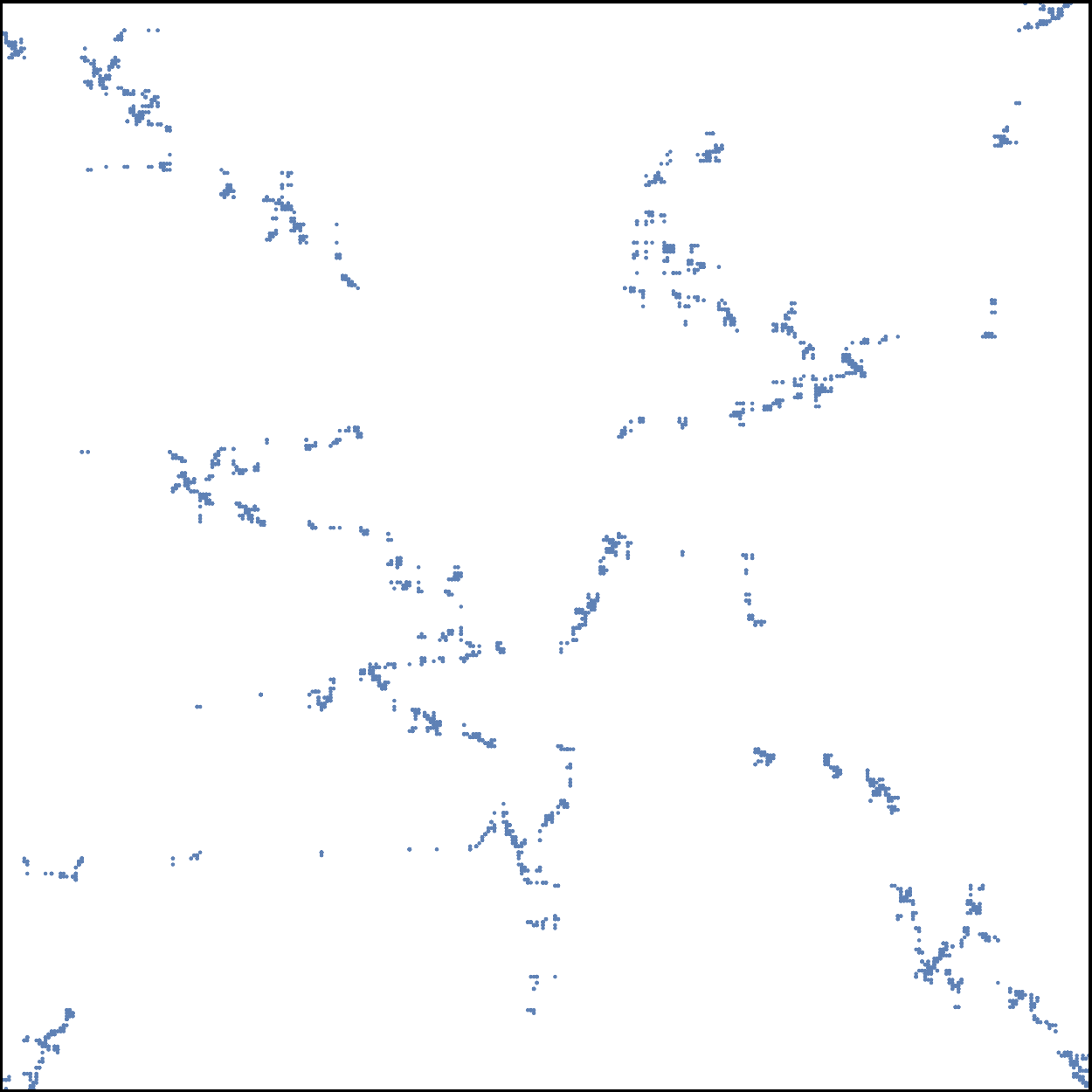}
	\end{minipage}
	\begin{minipage}[c]{0.29\textwidth}
		\caption{The diagrams of two uniform Baxter permutations of size 3253 (left) and 4520 (right). (How these permutations were obtained is discussed in \cref{sect:simulations}).
			\label{fig:Baxter_perm_3253}}
	\end{minipage}
\end{figure}

\section{Introduction and main results}
Baxter permutations were introduced by Glen Baxter in 1964 \cite{MR0184217} to study fixed points of commuting functions.
\emph{Baxter permutations} are permutations avoiding the two vincular patterns $2\underbracket[.5pt][1pt]{41}3$ and $3\underbracket[.5pt][1pt]{14}2$, i.e.\ permutations $\sigma$ such that there are no indices $i < j < k$ such that $\sigma(j+1) < \sigma(i) < \sigma(k) < \sigma(j)$ or $\sigma(j) < \sigma(k) < \sigma(i) < \sigma(j+1)$.

In the last 30 years, several bijections between Baxter permutations, plane bipolar orientations and certain walks in the plane\footnote{We refer to \cref{sect:discrete_objects} for a precise definition of all these objects.} have been discovered. These relations between discrete objects of different nature are a \emph{beautiful piece of combinatorics}\footnote{Quoting the abstract of \cite{MR2763051}.} that we aim at investigating from a more probabilistic point of view in this extended abstract.
The goal of our work is to explore local and scaling limits of these objects and to study the relations between their limits. Indeed, since these objects are related by several bijections at the discrete level, we expect that most of the relations among them also hold in the ``limiting discrete and continuous worlds''. 

We mention that some limits of these objects (and related ones) were previously investigated. 
Dokos and Pak \cite{MR3238333} explored the expected limit shape of doubly alternating Baxter
permutations, i.e.\ Baxter permutations $\sigma$ such that $\sigma$ and $\sigma^{-1}$ are alternating. In their article they claimed that \emph{``it would be interesting to compute the limit shape of random Baxter permutations''}. One of the main goals of our work is to answer this question by proving permuton convergence for uniform Baxter permutations (see \cref{thm:baxter_permuton_conv} below).
For plane walks (i.e.\ walks in $\Z^2$) conditioned to stay in a cone, we mention the remarkable works of Denisov and Wachtel~\cite{MR3342657} and Duraj and Wachtel~\cite{duraj2015invariance} where they proved (together with many other results) convergence towards Brownian meanders or excursions in cones.
This allowed Kenyon, Miller, Sheffield and
Wilson~\cite{MR3945746} to show that the quadrant walks encoding uniformly random plane bipolar orientations (see \cref{sect:KMSW} for more details) converge to a Brownian excursion of correlation $-1/2$ in the quarter-plane. This is interpreted as Peanosphere convergence of the maps decorated by the \textit{Peano curve} (see \cref{sect:KMSW} for further details) to a $\sqrt{4/3}$-Liouville Quantum Gravity (LQG) surface decorated by an independent $\text{SLE}_{12}$. This result was then significantly strengthened by Gwynne, Holden and Sun~\cite{GHS} who proved joint convergence for the map and its dual, in the setting of infinite-volume triangulations.
In proving \cref{thm:baxter_permuton_conv} we extend some of the methods and results of \cite{GHS}, with a key difference in the way limiting objects are defined. We discuss this in more precise terms at the end of this introduction.

So far we have considered three families of objects: Baxter permutations (denoted by $\mathcal{P}$); walks in the non-negative quadrant $(\mathcal W)$ starting on the $y$-axis and ending on the $x$-axis, with some specific admissible increments defined in the forthcoming \cref{eq:admis_steps}; and plane bipolar orientations $(\mathcal{O})$. For our purposes, specifically for the proof of the permuton convergence, we introduce in \cref{sect:discrete_coal_proc} a fourth family of objects called \emph{coalescent-walk processes} $(\mathcal{C})$.

We denote by $\mathcal W_n$ the subset of $\mathcal W$ consisting of quadrant walks of size $n$ (and similarly $\mathcal C_n,\mathcal P_n,\mathcal{O}_n$ for the other three families).
We will present four size-preserving bijections
(denoted using two letters that refer to the domain and co-domain)
between these four families, summarized in the following diagram:
\begin{equation}
\label{eq:comm_diagram}
\begin{tikzcd}
\mathcal{W} \arrow{r}{\wcp}  & \mathcal{C} \arrow{d}{\cpbp} \\
\mathcal{O} \arrow{u}{\bow} \arrow{r}{\bobp}& \mathcal{P}
\end{tikzcd} \;,
\end{equation}
where the mapping $\bow$ was introduced in \cite{MR3945746} and $\bobp$ in \cite{MR2734180}; the others are new.
Our first result is the following:
\begin{theorem} \label{thm:keyres}
	The diagram in \cref{eq:comm_diagram} commutes.
	In particular, $\cpbp\circ\wcp:\mathcal{W}\to\mathcal{P}$ is a size-preserving bijection.
\end{theorem}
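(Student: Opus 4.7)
The plan is to chase an arbitrary bipolar orientation $O\in\mathcal{O}_n$ around the diagram and verify that $\bobp(O)=\cpbp\circ\wcp\circ\bow(O)$. Since $\bow$ (Kenyon--Miller--Sheffield--Wilson) and $\bobp$ (Bonichon--Bousquet-Mélou--Fusy) are already pinned down in the literature by explicit combinatorial recipes on edges and faces of $O$, the real work is to unpack the two new maps $\wcp$ and $\cpbp$ and to provide a dictionary translating them into statements about $O$. A standard ``four-bijections commute iff three-compositions agree on a single family'' reduction then delivers \cref{thm:keyres}; the size-preserving bijection statement follows because each arrow is individually a size-preserving bijection.

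Concretely, I would first recall the explicit description of $\bow$: at the $i$-th non-pole vertex $v_i$ of $O$ (read in the canonical bipolar order), the increment of $W=\bow(O)$ records the number of left/right incoming and outgoing edges at $v_i$, yielding a walk with admissible steps from the set in \eqref{eq:admis_steps}. Similarly I would make explicit $\bobp(O)$, whose $i$-th value encodes the relative position of the $i$-th edge in the ``left tree'' versus the ``right tree'' of $O$. Next I would unfold $\wcp(W)$ as a family $\{Z^{(i)}\}_{i=1}^n$ of lattice trajectories, each $Z^{(i)}$ launched at time $i$ from a height dictated by the first coordinate of $W$ and then driven by the increments of $W$, with a prescribed coalescing rule whenever two trajectories would cross. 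Finally I would unfold $\cpbp$ as the map sending this family to the permutation comparing the times at which the trajectories exit through the $x$-axis.

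The crux of the proof will be to set up a bijective correspondence between the $n$ trajectories of $\wcp(\bow(O))$ and the $n$ non-pole edges of $O$ such that, under this identification, (a) the trajectory $Z^{(i)}$ follows the sequence of faces lying immediately to the right of the $i$-th edge as one walks from source to sink along the orientation, and (b) two trajectories coalesce exactly when the corresponding edges share their right face at some intermediate vertex. Step (a) follows by induction on the bipolar order once one checks that the local increment of $\bow$ at a vertex $v$ encodes the way edges entering $v$ on the right are redistributed outgoing on the right, which is precisely how $\wcp$ updates the active trajectories. Step (b), the coalescence rule, is exactly the instance when two edges meet at a common vertex without having a separating face on the right.

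Once this dictionary is in place, unpacking the definition of $\cpbp$ turns the comparison of exit times of $Z^{(i)}$ and $Z^{(j)}$ into a comparison of the left-tree/right-tree positions of the $i$-th and $j$-th edges of $O$, which is exactly the definition of $\bobp(O)$. The main obstacle I anticipate is the combinatorial bookkeeping in step (a)--(b): checking that the partial updates prescribed by the admissible steps of $W$ at each vertex correspond cleanly to the local picture of faces in $O$, uniformly over all five types of increments in \eqref{eq:admis_steps}. Beyond that, the argument is a direct verification, and the ``in particular'' clause is automatic since $\bow,\wcp,\cpbp$ are each size-preserving bijections.
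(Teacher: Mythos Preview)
Your overall strategy---verify $\bobp = \cpbp\circ\wcp\circ\bow$ on an arbitrary $m\in\mathcal{O}$ by translating the coalescent trajectories back into the geometry of $m$---is the same as the paper's. However, several of your descriptions of the maps are wrong, and with those descriptions the argument cannot be carried out.

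First, $\bow$ is indexed by the \emph{edges} of $m$, not by non-pole vertices: $W_t=(X_t,Y_t)$ records the heights of the bottom and top vertex of the $t$-th edge (in the interface order) in the trees $T(m)$ and $T(m^{**})$. Second, each trajectory $Z^{(i)}$ of $\wcp(W)$ starts at height $0$ at time $i$, not at a height read off the first coordinate of $W$. Third, and most seriously, $\cpbp$ is not defined via ``exit times through the $x$-axis''; it is the permutation associated with the total order $i\leq_Z j$ determined by the \emph{sign} of $Z^{(\min(i,j))}_{\max(i,j)}$, equivalently read off the exploration of the tree $\tree(Z)$ built from the coalescence structure. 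Your concluding step, ``comparison of exit times of $Z^{(i)}$ and $Z^{(j)}$ becomes comparison of left-tree/right-tree positions'', therefore has no content once $\cpbp$ is stated correctly.

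The paper's version of your ``crux'' is sharper and avoids the five-case local check you anticipate. It proves (\cref{prop:eq_trees}) that the tree $\tree(Z)$ equals the dual tree $T(m^*)$, with edges labeled by the exploration order of $T(m)$. Since $\bobp(m)$ is, by \cref{defn:bobp}, read off the pair of explorations of $T(m)$ and $T(m^*)$, and $\cpbp(Z)$ is read off the exploration of $\tree(Z)$ (\cref{rk:cpbp_through_fortree}), commutativity is immediate once that tree identity holds. Your intuition that $Z^{(i)}$ ``follows the faces to the right of the $i$-th edge'' is morally this same statement (faces of $m$ are vertices of $m^*$), but the tree formulation packages the coalescence rule cleanly and bypasses the increment-by-increment bookkeeping.

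Finally, your last sentence is circular: $\wcp$ and $\cpbp$ are not shown to be individual bijections prior to this theorem (indeed $\mathcal C$ is \emph{defined} as the image of $\wcp$). The ``in particular'' clause follows from commutativity together with the known bijectivity of $\bow$ and $\bobp$; individual bijectivity of the new arrows is a consequence, not an input.
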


Our second result deals with local limits, more precisely Benjamini--Schramm limits. Informally, Benjamini--Schramm convergence for discrete objects looks at the convergence of the neighborhoods (of any fixed size) of a uniformly distinguished point of the object (called root).
In order to properly define the Benjamini--Schramm convergence for the four families, we need to present the respective local topologies. We defer this task to the complete version of this abstract, here we just mention that the local topology for graphs (and so plane bipolar orientations) was introduced by Benjamini and Schramm~\cite{MR1873300} while the local topology for permutations was introduced by the first author~\cite{borga2018local}. Local topologies for plane walks and coalescent-walk processes can be defined in a similar way. 
We denote by $\widetilde \Walks_\bullet$  the completion of the space of rooted walks $\bigsqcup_{n\geq 1} \mathcal W_n \times [n]$  with respect to the metric defining the local topology. The spaces $\widetilde \Coals_\bullet, \widetilde \Perms_\bullet, \widetilde \Maps_\bullet$ are defined likewise from $\mathcal C,\mathcal P, \mathcal O$.

We define below the candidate limiting objects. As a matter of fact, a formal definition requires an extension of the mappings in \cref{eq:comm_diagram} to infinite-volume objects (for the mappings $\wcp$ and $\bow^{-1}$ also an extension to walks that are \emph{not} conditioned in the quadrant). We do not present all the details of such extensions, but they can be easily guessed from our description of the mappings $\wcp,\bow,\cpbp$ and $\bobp$ given in \cref{sect:discrete_objects}.

Let $\nu$ denote the probability distribution on $\Z^2$ given by:
\begin{equation}\label{eq:walk_distrib}
\nu = \frac 12 \delta_{(+1,-1)} + \sum_{i,j\geq 0} 2^{-i-j-3}\delta_{(-i,j)},\quad \text{where $\delta$ denotes the Dirac measure},
\end{equation}
and let\footnote{Here and throughout the paper we denote random quantities using \textbf{bold} characters.} $\bar{\bm W} = (\bar{\bm X},\bar{\bm Y}) = (\bar{\bm W}_t)_{t\in \Z}$ be a bidirectional random plane walk with step distribution $\nu$, with value $(0,0)$ at time 0.
Let $\bar{\bm Z} = \wcp(\bar{\bm W})$ be the corresponding infinite coalescent-walk process, $\bar{\bm \sigma} = \cpbp(\bar{\bm Z})$ the corresponding infinite permutation on $\Z$ (in this context, an infinite permutation is a total order of $\Z$), and $\bar{\bm m} = \bow^{-1}(\bar{\bm W})$ the corresponding infinite map.

\begin{theorem}\label{thm:local} For every $n\in \Z_{>0}$, let $\bm W_n$, $\bm Z_n$, $\bm \sigma_n$, and $\bm m_n$ denote uniform objects of size $n$ in $\mathcal W_n$, $\mathcal C_n$, $\mathcal P_n$, and $\mathcal O_n$ respectively, related by the bijections of \cref{eq:comm_diagram}.
For every $n\in\Z_{>0}$, let $\bm i_n$ be an independently chosen uniform index of $[n]$. Then we have joint convergence in distribution in the space $\widetilde \Walks_\bullet\times\widetilde \Coals_\bullet\times\widetilde \Perms_\bullet\times\widetilde \Maps_\bullet$:
		\[((\bm W_n, \bm i_n), (\bm Z_n, \bm i_n), (\bm \sigma_n, \bm i_n), (\bm m_n, \bm i_n)) \xrightarrow[n\to\infty]{d} (\bar{\bm W},\bar{\bm Z},\bar{\bm \sigma},\bar{\bm m}).\]
\end{theorem}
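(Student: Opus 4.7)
The plan is to first establish Benjamini--Schramm convergence of the walks $(\bm W_n,\bm i_n) \xrightarrow{d} \bar{\bm W}$ in $\widetilde \Walks_\bullet$, and then to propagate this convergence through the (extensions to infinite objects of the) bijections $\wcp$, $\cpbp$ and $\bow^{-1}$. Since $\bar{\bm Z} = \wcp(\bar{\bm W})$, $\bar{\bm \sigma} = \cpbp(\bar{\bm Z})$ and $\bar{\bm m} = \bow^{-1}(\bar{\bm W})$ by definition, and since by \cref{thm:keyres} the four finite objects are deterministic images of $(\bm W_n,\bm i_n)$ under the same bijections (with the index $\bm i_n$ playing the role of root in each of the four spaces), the joint convergence will follow from the marginal walk convergence together with an almost-sure continuity statement for each of $\wcp$, $\cpbp$ and $\bow^{-1}$ at the limit $\bar{\bm W}$, via a continuous mapping argument in the product topology.

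For the marginal step, the key input is that the uniform measure on $\mathcal W_n$ coincides with the law of the i.i.d.\ $\nu$-random walk conditioned to start on the $y$-axis, stay in the non-negative quadrant, and end on the $x$-axis; this is exactly how the distribution $\nu$ in \eqref{eq:walk_distrib} is chosen and is essentially built into the KMSW bijection $\bow$. Given a cylinder event $A$ on a window of radius $k$ centred at time $\bm i_n$, one can write the conditional probability that $\bm W_n$ falls in $A$ as a product of the $\nu^{\otimes (2k+1)}$-probability of $A$ with a Radon--Nikodym correction obtained by splitting the quadrant-confinement event into the three time intervals $[0,\bm i_n-k]$, $[\bm i_n-k,\bm i_n+k]$ and $[\bm i_n+k,n]$. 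The outer corrections reduce to quotients of cone-crossing probabilities for walks with prescribed endpoints deep inside the quadrant, which converge to $1$ by the sharp asymptotics of Denisov--Wachtel~\cite{MR3342657} and Duraj--Wachtel~\cite{duraj2015invariance}; combined with the fact that $\bm i_n / n$ is asymptotically uniform on $[0,1]$ so that $\min(\bm i_n, n-\bm i_n) \to \infty$ in probability, this yields the walk convergence.

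To upgrade to joint convergence, we show that each bijection is almost surely continuous at $\bar{\bm W}$ with respect to the local topologies, in the sense that the restriction of the image to a fixed-radius window around the root is determined, up to arbitrarily small probability, by a finite (possibly random) window of $\bm W_n$ around $\bm i_n$. For $\bow^{-1}$ this is essentially built into the edge-by-edge construction. For $\wcp$, a trajectory started at time $s$ and evaluated at time $t>s$ depends only on the walk increments between $s$ and $t$, so the content of $\bm Z_n$ on a window of radius $k$ around $\bm i_n$ is determined by $\bm W_n$ on that window together with the entry positions of the finitely many older trajectories that cross it; these entry positions become local after one shows that each such older trajectory coalesces with a trajectory started inside the window after a tight amount of time. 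For $\cpbp$, recovering the induced pattern of $\bm \sigma_n$ on a window of radius $k$ reduces to comparing $2k+1$ trajectories pairwise, which again amounts to asking that any two coalescent trajectories merge in finite time.

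The main obstacle is therefore the following tightness statement: under the law of $\bar{\bm Z}$, any two coalescent trajectories started at distinct times almost surely coalesce in a tight amount of time. Heuristically, the difference of two neighbouring trajectories driven by $\bar{\bm W}$ behaves like a random walk with negative drift (reflecting the negative mean of the first coordinate of $\nu$ on the non-trivial steps) absorbed at $0$, so coalescence happens quickly with overwhelming probability; the corresponding statement for $\bm Z_n$ then follows by the contiguity argument from the previous paragraph. Once this coalescence estimate is in place, combining it with the continuity assertions for $\wcp$, $\cpbp$ and $\bow^{-1}$ and with the marginal walk convergence yields, via a continuous mapping argument, the full joint convergence in $\widetilde \Walks_\bullet\times\widetilde \Coals_\bullet\times\widetilde \Perms_\bullet\times\widetilde \Maps_\bullet$ claimed in the theorem.
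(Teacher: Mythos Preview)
The paper explicitly leaves the proof of this theorem out of the extended abstract (see the last sentence of the outline), so there is no full proof to compare against. That said, the Remark following the statement tells us the intended strategy: item~3 asserts that jointness follows from the fact that the infinite-volume extensions of the bijections in \cref{eq:comm_diagram} are a.s.\ continuous, and item~4 records that the map marginal was already handled in \cite{gwynne2017mating}. Your plan---first establish local convergence of $(\bm W_n,\bm i_n)$ to $\bar{\bm W}$ via the Denisov--Wachtel/Duraj--Wachtel asymptotics, then push it through a.s.\ continuity of $\wcp$, $\cpbp$, $\bow^{-1}$---is exactly this strategy, so the high-level architecture is correct.

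There is one genuine slip in your heuristic. You claim that the difference of two neighbouring coalescent trajectories behaves like a random walk with \emph{negative drift}, attributing this to ``the negative mean of the first coordinate of $\nu$ on the non-trivial steps''. But $\nu$ is centered: the first and second coordinates of $\bar{\bm W}$ both have mean zero (this is why Donsker gives a Brownian limit with no drift). The right mechanism is not drift but recurrence: by \cref{prop:trajectories_are_rw}, each trajectory $\bar{\bm Z}^{(u)}$ is itself a centered, finite-variance random walk, hence recurrent on $\Z$, and in particular returns to $0$ in finite time a.s.; at that return time it coalesces with the trajectory started there. This gives a.s.\ finite coalescence, which is what you need for a.s.\ continuity, but not the exponential-type tightness your drift picture would suggest. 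The distinction matters if you try to make the argument quantitative.

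A second, smaller point: depending on how the local topology on $\widetilde\Coals_\bullet$ is set up (the paper does not spell this out), the continuity of $\wcp$ and $\cpbp$ may be more immediate than you indicate. If the neighbourhood of the root records only the trajectories \emph{started} inside the window, restricted to the window, then $\wcp$ is deterministically continuous (each $Z^{(i)}_j$ with $i,j$ in the window depends only on the walk on $[i,j]$), and likewise the pattern induced by $\bm\sigma_n$ on the window is determined by those same signs via \cref{cor:patterns}. The coalescence argument is then only needed if one also tracks older trajectories crossing the window, or for the map side.
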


\begin{remark}
	We give a few comments on this result.
	\begin{enumerate}
		\item The mapping $\bow^{-1}$ naturally endows the map $\bm m_n$ with an edge labeling and the root $\bm i_n$ of $\bm m_n$ is chosen according to this labeling.
		\item We can also prove a quenched version of the above result (of annealed type) for all the four objects (not presented in this extended abstract). It entails (see \cite[Theorem 2.32]{borga2018local}) that consecutive pattern densities of $\bm \sigma_n$ jointly converge in distribution.
		\item The fact that the four convergences are joint follows from the fact that the extensions of the mappings in \cref{eq:comm_diagram} to infinite-volume objects are a.s.\ continuous.
		\item The annealed Benjamini-Schramm convergence for  bipolar orientations to the so-called \textit{Uniform Infinite Bipolar Map} was already proved in \cite[Prop. 3.10]{gwynne2017mating}.
	\end{enumerate}
\end{remark}

Our third (and main) result is a scaling limit result for Baxter permutations (see \cref{fig:Baxter_perm_3253} for some simulations), in the framework of permutons developed by \cite{hoppen2013limits}.
A \emph{permuton} $\mu$ is a Borel probability measure on the unit square $[0,1]^2$ with uniform marginals, that is 
$\mu( [0,1] \times [a,b] ) = \mu( [a,b] \times [0,1] ) = b-a,$
for all $0 \le a \le b\le 1$. Any permutation $\sigma$ of size $n \ge 1$ may be interpreted as the permuton $\mu_\sigma$ given by the sum of Lebesgue area measures
\begin{equation}
\label{eq:perdef}
\mu_\sigma(A)= n \sum_{i=1}^n \Leb\big([(i-1)/n, i/n]\times[(\sigma(i)-1)/n,\sigma(i)/n]\cap A\big),
\end{equation}
for all Borel measurable sets $A$ of $[0,1]^2$.
Let $\mathcal M$ be the set of permutons. As for general probability measure, we say that a sequence of (deterministic) permutons $(\mu_n)_n$ converges \emph{weakly} to $\mu$ (simply denoted $\mu_n \to \mu$) if 
$
\int_{[0,1]^2} f d\mu_n \to \int_{[0,1]^2} f d\mu,
$
for every (bounded and) continuous function $f: [0,1]^2 \to \mathbb{R}$. With this topology, $\mathcal M$ is compact.
Convergence for random permutations is defined as follows:

\begin{definition}\label{defn:perm_conv}
	We say that a random permutation $\bm{\sigma}_n$ converges in distribution to a random permuton $\bm{\mu}$ as $n \to \infty$ if the random permuton $\mu_{\bm{\sigma}_n}$ converges in distribution to $\bm{\mu}$ with respect to the weak topology.  
\end{definition}
Random permuton convergence entails joint convergence in distribution of all (classical) pattern densities (see \cite[Theorem 2.5]{bassino2017universal}). The study of permuton limits, as well as other scaling limits of permutations, is a rapidly developing field in discrete probability theory, see for instance \cite{bassino2017universal, bassino2018brownian,  borga2018localsubclose, borga2019square, hoffman2019scaling, kenyon2015permutations, maazoun, madras2010random, mp}. Our main result is the following:

\begin{theorem}
	\label{thm:baxter_permuton_conv}
	Let $\bm \sigma_n$ be a uniform Baxter permutation of size $n$. There exists a random permuton $\bm{\mu}_{B}$ such that
	$\mu_{\bm \sigma_n}\stackrel{d}{\longrightarrow} \bm{\mu}_{B}.$
\end{theorem}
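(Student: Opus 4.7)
The plan is to transfer the convergence to the walk side via the bijection $\cpbp\circ\wcp$ of \cref{thm:keyres} and to construct the limiting permuton $\bm\mu_B$ directly from the scaling limit of the underlying quadrant walk. Since $\mathcal M$ is compact, the family $(\mu_{\bm\sigma_n})_n$ is automatically tight, so the main task is to identify the limit.

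First, let $\bm W_n$ be the uniform quadrant walk with step distribution $\nu$ from \eqref{eq:walk_distrib} such that $\bm\sigma_n=\cpbp\circ\wcp(\bm W_n)$. Applying the invariance principles of Denisov--Wachtel~\cite{MR3342657} and Duraj--Wachtel~\cite{duraj2015invariance} along the lines of~\cite{MR3945746}, the rescaled walk $(n^{-1/2}\bm W_{\lfloor nt\rfloor})_{t\in[0,1]}$ converges in law in $C([0,1],\R^2)$ to a correlated Brownian excursion $\bar{\bm E}=(\bar{\bm X},\bar{\bm Y})$ in the non-negative quadrant with correlation $-1/2$. I would then lift this convergence to the associated coalescent-walk process $\bm Z_n=\wcp(\bm W_n)$. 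The $i$-th trajectory of $\bm Z_n$ is a random walk whose increments depend on those of $\bm W_n$ and which merges with another trajectory upon meeting; after rescaling, each trajectory started at a continuous time $t\in[0,1]$ should solve, driven by $\bar{\bm E}$, the perturbed Tanaka-type SDE announced in the abstract, producing a continuous random coalescent-walk process $\bar{\bm Z}$.

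With $(\bar{\bm E},\bar{\bm Z})$ constructed, I would define the candidate Baxter permuton $\bm\mu_B$ as the continuous analogue of $\cpbp$: the coalescence relation of $\bar{\bm Z}$ induces a random total order on $[0,1]$, hence an a.s.\ Lebesgue-preserving measurable map $\bm\phi:[0,1]\to[0,1]$, and I set $\bm\mu_B$ to be the pushforward of Lebesgue measure on $[0,1]$ under $t\mapsto(t,\bm\phi(t))$. To deduce $\mu_{\bm\sigma_n}\xrightarrow{d}\bm\mu_B$, I would use the characterization of permuton convergence via pattern densities~\cite{hoppen2013limits,bassino2017universal}: for independent uniform $\bm U_1,\dots,\bm U_k\in[0,1]$, one shows via a Skorokhod coupling that the pattern $\pat(\bm\sigma_n;\lfloor n\bm U_1\rfloor,\dots,\lfloor n\bm U_k\rfloor)$ converges jointly in distribution to the pattern read off from the $k$ trajectories of $\bar{\bm Z}$ started at times $\bm U_1,\dots,\bm U_k$. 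This reduces to an almost-sure continuity statement for the maps $\wcp$ and $\cpbp$ at typical starting points of a Brownian excursion, which follows once trajectories of $\bar{\bm Z}$ started from a.e.\ time are known to be pathwise-unique solutions of the Tanaka SDE.

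The principal obstacle is the construction and well-posedness of $\bar{\bm Z}$ itself. The drift in the perturbed Tanaka SDE is non-Lipschitz (reflecting the asymmetric positive and negative step sizes under $\nu$), so pathwise uniqueness, the coalescent property of a family of solutions started at different initial times, and the measurability of the whole flow in the driving excursion are all delicate and occupy the core of the argument. A secondary difficulty is controlling the discrete-to-continuous limit at the finite-dimensional level uniformly enough to treat joint pattern densities, since the coalescence times of discrete trajectories must be shown not to shift in the scaling limit. Once the continuous coalescent-walk process is in place, the final step from finite-dimensional pattern convergence to permuton convergence is a relatively soft continuity argument based on the definition of $\bm\mu_B$ above.
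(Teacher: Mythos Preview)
Your proposal is correct and follows essentially the same route as the paper: transfer to the walk via $\cpbp\circ\wcp$, pass to the conditioned Brownian excursion, build the continuous coalescent-walk process as the family of pathwise-unique solutions to the Tanaka-type SDE \eqref{eq:flow_SDE}, define $\bm\mu_B$ as the pushforward of Lebesgue under $(\Id,\varphi)$, and conclude via the pattern-density characterization of permuton convergence using \cref{cor:patterns} and \cref{thm:discret_coal_conv_to_continuous}. The only cosmetic difference is that the paper does not argue tightness-plus-identification but goes straight through \cref{prop:perm_charact}; your ``boundary'' concern about coalescence times is exactly what \cref{lem:boundary} handles via the Portmanteau theorem.
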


An explicit construction of the limiting permuton $\bm{\mu}_{B}$, called the \emph{coalescent Baxter permuton}, is given in \cref{sect: constr_limiting_object}. The proof of \cref{thm:baxter_permuton_conv} is based on a result on scaling limits of the coalescent-walk processes $\bm Z_n$, which appears to be of independent interest, and is discussed in \cref{sect:coal_conv}. 
In particular, the convergence of uniform Baxter permutations is joint\footnote{We leave a proper claim of joint convergence to the full version of this paper. However the joint distribution of the scaling limits is the one presented in \cref{sect: constr_limiting_object}.} with that of the conditioned versions of $\bm W_n$ and $\bm Z_n$ presented in \cref{thm:discret_coal_conv_to_continuous}.

\medskip

We finally discuss the relations with the work of Gwynne, Holden and Sun \cite{GHS}. They show that for infinite-volume bipolar oriented triangulations, the explorations of the two tree/dual tree pairs of the map and its dual converge jointly. The limit is the pair of planar Brownian motions which encode the same $\sqrt{4/3}$-LQG surface decorated by both an $\text{SLE}_{12}$ curve and the ``dual'' $\text{SLE}_{12}$ curve, traveling in a direction perpendicular (in the sense of imaginary geometry) to the original curve. As shown below (\cref{lem:equiv_bij}), the bijection of \cite{MR2734180} between plane bipolar orientations and Baxter permutations can be rewritten in terms of the interaction of these two tree/dual tree pairs, which explains the connection between our work and the one of \cite{GHS}.

We prove \cref{thm:baxter_permuton_conv} by extending some of their constructions to finite-volume general maps, which allows us to provide an analog of their result (that are restricted to triangulations) for general plane bipolar orientations in finite volume, jointly with the convergences above\footnote{Not presented in this extended abstract.}. 
More precisely, the coalescent-walk process defined in \cref{sect: bij_walk_coal} is an extension of the random walk $\mathcal X$ defined in \cite[Section 2.1]{GHS}. The fact that it encodes the spanning tree of the dual map (\cref{prop:eq_trees}) is a version of \cite[Lemma 2.1]{GHS}, albeit we present it differently.
Our main technical ingredient is the convergence of the coalescent-walk process driven by a random plane walk of \cref{thm:coal_con_uncond}. It corresponds to \cite[Theorem 4.1] {GHS}.
The way the limiting object (the right-hand side of \cref{eq:coal_con_uncond}) is defined is however very different, and the proofs differ as a consequence. In our case, it comes from a stochastic differential equation (\cref{eq:flow_SDE}), for which existence and uniqueness are known from the literature \cite{MR3882190,MR3098074}. In their case, it is built using imaginary geometry, and characterized by its excursion decomposition. These are nonetheless two descriptions of the same object, providing an SDE formulation of an intricate imaginary geometry coupling. We wish to explore consequences of this in further works.

\medskip

\textbf{Outline of the extended abstract.} The remainder of the abstract is organized as follows. In \cref{sect:discrete_objects} we present the objects and the mappings involved in the diagram in \cref{eq:comm_diagram}. Moreover, we sketch the proof of \cref{thm:keyres}. \cref{sect:coal_and_perm_conv} is devoted to developing the theory for the proof of \cref{thm:baxter_permuton_conv}. In particular, in \cref{sect:coal_conv} we present the aforementioned results for scaling limits of coalescent-walk processes, and in \cref{sect: constr_limiting_object} we give an explicit construction of the limiting permuton for Baxter permutations. Finally, in \cref{sect:coal_con_uncond} we prove our main technical ingredient (\cref{thm:coal_con_uncond}), and in \cref{sect:perm_conv} we finish the proof of \cref{thm:baxter_permuton_conv}. Note that we leave the proof of \cref{thm:local} out of this abstract.

\section{Bipolar orientations, walks in the non-negative quadrant, Baxter permutations and coalescent-walk processes}\label{sect:discrete_objects}

\subsection{Plane bipolar orientations} 
We recall that a \emph{planar map} is a connected graph embedded in the plane with no edge-crossings, considered up to continuous deformation. A map has vertices, edges, and faces, the latter being the connected components of the plane remaining after deleting the edges. The outer face is unbounded, the inner faces are bounded. 
\begin{definition}
	A \emph{plane bipolar orientation} (or simply \emph{bipolar orientation}) is a planar map with oriented edges such that 
	\begin{itemize}
		\item there are no oriented cycles;
		\item there is exactly one vertex with only outgoing edges (the \textit{source}, denoted $s$), and exactly one vertex with only incoming edges (the \textit{sink}, denoted $s'$); all other vertices, called \textit{non-polar},  have both types of edges;
		\item the source and the sink are both incident to the outer face.
	\end{itemize}
	The size of a bipolar orientation $m$ is its number of edges and will be denoted with $|m|$. 
\end{definition}

Every bipolar orientation can be plotted in the plane in such a way that every edge is oriented from bottom to top (as done for example in \cref{fig:bip_orient}). 
\begin{figure}[htbp]
	\begin{minipage}[c]{0.65\textwidth}
		\centering
		\includegraphics[scale=0.4]{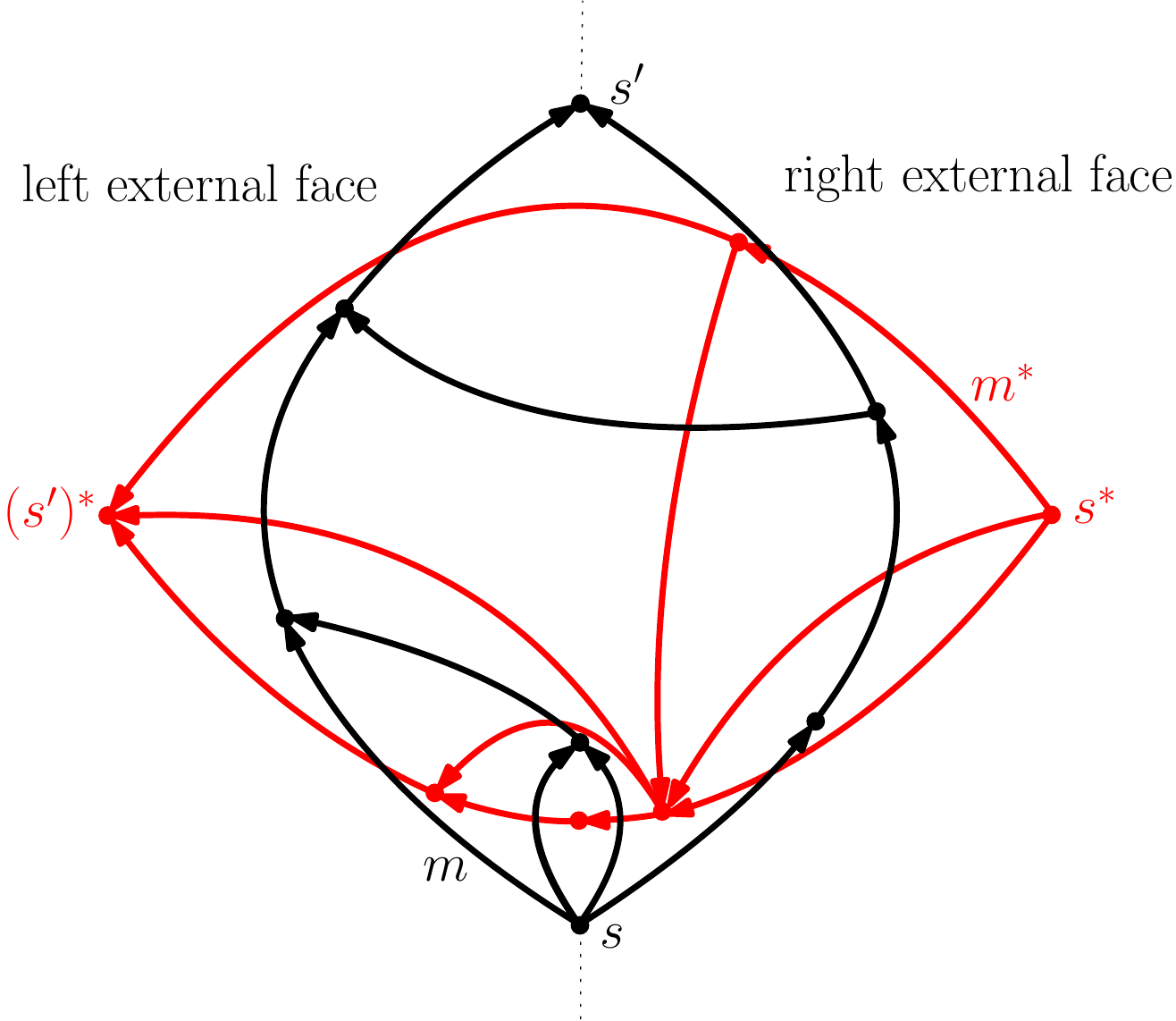}
	\end{minipage}
	\begin{minipage}[c]{0.35\textwidth}
		\caption{In black, a bipolar orientation $m$ of size 10. Note that every edge is oriented from bottom to top. In red, its dual map $m^*$. Similarly, we plot the dual map in such a way that every edge is oriented from right to left. This map will be used in several examples. In later pictures, the orientation of each edge is not displayed. \label{fig:bip_orient}}
	\end{minipage}
\end{figure}

Given a bipolar orientation, an edge $e$ from $v$ to $w$ is bordered, in the clockwise cyclic order, by its bottom vertex, its left face, its top vertex, its right face.
It is useful, for the consistency of definitions, to think of the external face as split in two (see \cref{fig:bip_orient} for an example): the \textit{left external face}, and the \textit{right external face}. 

There is a natural notion of duality for a bipolar orientation $m$. It is the classical duality for (unoriented) maps where the orientation of a dual edge between two primal faces is from right to left. The primal right external face becomes the dual source, and the primal left external face becomes the dual sink. This map $m^*$ is also a bipolar orientation (see~\cref{fig:bip_orient}). The map $m^{**}$ is just the reversal of the map $m$: the source and sink are exchanged, and all edges are reversed. 

Given a bipolar orientation $m$, its \emph{down-right tree} $T(m)$ may be defined as a set of edges equipped with a parent relation, as follows.
\begin{itemize}
	\item The edges of $T(m)$ are the edges of $m$.
	\item Let $e\in m$ and $v$ its bottom vertex.
	\begin{itemize}
		\item If $v$ is the source, then $e$ has no parent edge in $T(m)$ (it is grafted to the root of $T(m)$);
		\item if $v$ is not the source, the parent edge of $e$ in $T(m)$ is the right-most incoming edge of $v$.
	\end{itemize} 
\end{itemize}
The tree $T(m)$ can be drawn on top of $m$: the root of $T(m)$ corresponds to the source $s$ of $m$, internal vertices of $T(m)$ correspond to non-polar vertices of $m$, and leaves of $T(m)$ are the midpoints of some edges of $m$.
Note that one can draw the trees $T(m)$ and $T(m^{**})$ on the map $m$ without any crossing (see the left-hand side of \cref{fig:bip_orient _with_trees} for an example).

We conclude this section recalling that the \emph{exploration} of a tree $T$ is the visit of its vertices (or its edges) following the contour of the tree in the clockwise order.

\begin{figure}[htbp]
	\begin{minipage}[c]{0.66\textwidth}
		\centering
		\includegraphics[scale=0.5]{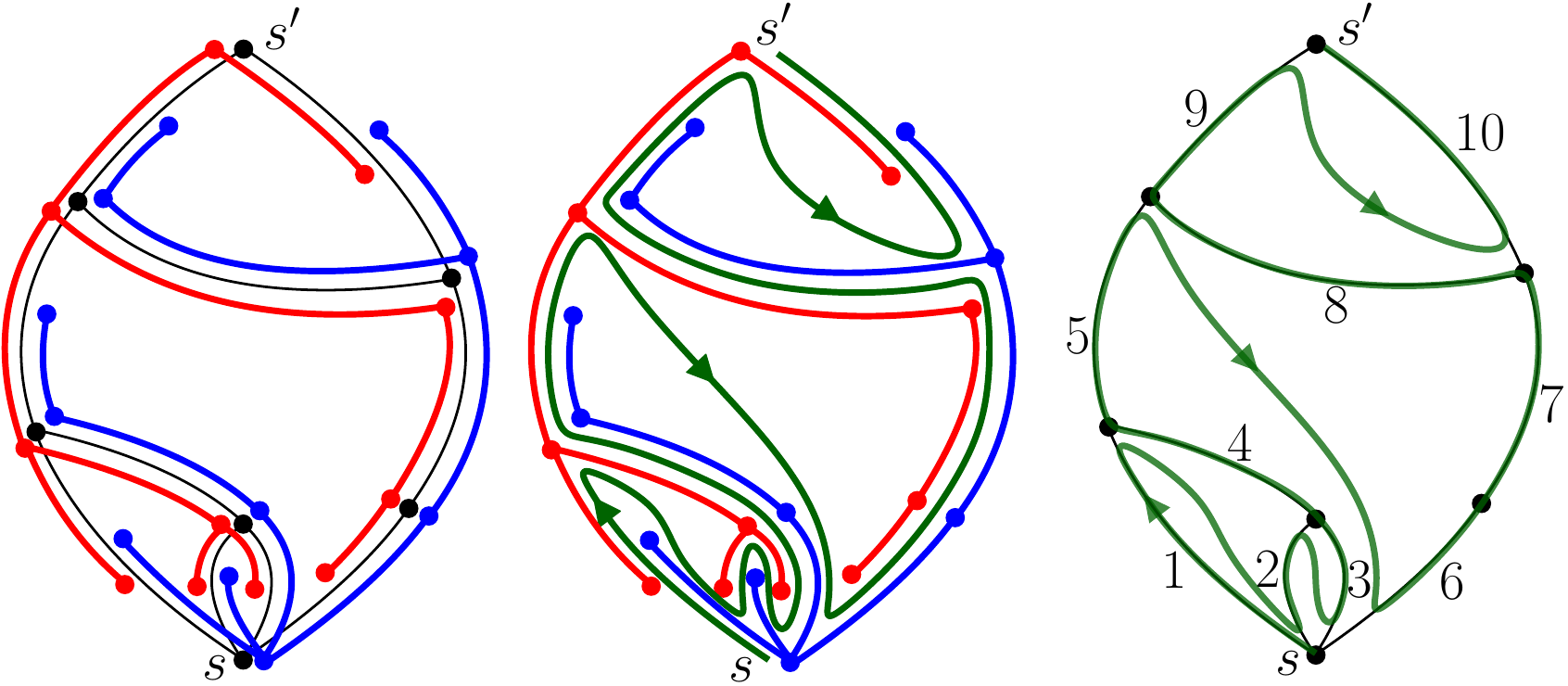}
	\end{minipage}
	\begin{minipage}[c]{0.34\textwidth}
		\caption{\textbf{Left:} A bipolar orientation $m$ with the trees $T(m)$ (in blue) and $T(m^{**})$ (in red). \textbf{Middle:} We add in green the \emph{interface path} tracking the interface between the two trees (see \cref{sect:KMSW}). \textbf{Right:} We label the edges of the bipolar orientations following the interface path. \label{fig:bip_orient _with_trees}}
	\end{minipage}
\end{figure}

\subsection{Kenyon-Miller-Sheffield-Wilson bijection}
\label{sect:KMSW}

We now present a bijection between bipolar orientations and some walks in the non-negative quadrant $\mathbb{Z}_{\geq 0}^2$, introduced in \cite[Section 2]{MR3945746} by Kenyon, Miller, Sheffield and Wilson.

Let $m$ be a bipolar orientation. We consider the exploration of the tree $T(m)$ (highlighted in green in the middle picture of \cref{fig:bip_orient _with_trees}) starting at the source $s$ and ending at the last visit of the sink $s'$. Note that this path (when reversed) is also the exploration of the tree $T(m^{**})$ stopped at the last visit of the source $s$. This path, called \emph{interface path}\footnote{The interface path goes sometimes under the name of \emph{Peano curve}, see for instance \cite{gwynne2019mating}.} since it winds between the trees $T(m)$ and $T(m^{**})$, identifies an ordering on the set $E$ of edges of $m$ since every edge of $T(m)$ corresponds exactly to one edge of $m$ (see the right-hand side of \cref{fig:bip_orient _with_trees} for an example). Let $e_1,e_2,\dots,e_{|m|}$ be the edges of $m$ listed according to this order.

\begin{definition}\label{defn:KMSW_bij}
	Given a bipolar orientation $m$, the corresponding walk $\bow(m)=(W_t)_{t\in [|m|]}=(X_t,Y_t)_{t\in [|m|]}$ of size $|m|$ in the non-negative quadrant $\mathbb{Z}_{\geq 0}^2$ is defined as follows: for $t\in [|m|]$, let $X_t$ be the distance in the tree $T(m)$ between the bottom vertex of $e_t$ and the root of $T(m)$ (corresponding to the source $s$), and let $Y_t$ be the distance in the tree $T(m^{**})$ between the top vertex of $e_t$ and the root of $T(m^{**})$ (corresponding to the sink $s'$).
\end{definition}

\begin{remark}
	The walk $(0,X_1+1,\ldots,X_{|m|}+1)$ is the height process of the tree $T(m)$. The walk $(0,Y_{|m|}+1, Y_{|m|-1}+1,\ldots, Y_1+1)$ is the height process of the tree $T(m^{**})$.
\end{remark}

Suppose that the left external face has $h+1$ edges and the right external face has $k+1$ edges, for some $h,k\geq 0$. Then the walk $(W_t)_{1\leq t\leq |m|}$ starts at $(0,h)$, ends at $(k,0)$, and stays in the non-negative quadrant $\mathbb{Z}_{\geq 0}^2$.
We finally investigate the possible values for the increments of the walk, i.e.\ the values of $W_{t+1}-W_t$. 
We say that two edges of a tree are consecutive if one is the parent of the other.
We first highlight that the interface path of the map $m$ has two different behaviors when following the edges $e_t$ and $e_{t+1}$: 
\begin{itemize}
	\item either it is following two consecutive edges of $T(m)$ (this is the case, for instance, of the edges $e_3$ and $e_4$ on the right-hand side of \cref{fig:bip_orient _with_trees});
	\item or it is first following $e_t$, then it is traversing a face of $m$, and finally is following $e_{t+1}$ (this is the case, for instance, of the edges $e_5$ and $e_6$ on the right-hand side of \cref{fig:bip_orient _with_trees}).
\end{itemize}
When the latter case happens, the interface path splits the boundary of the traversed face in two parts, a left and a right boundary.

Therefore the increments of the walk are either $(+1,-1)$ (when $e_t$ and $e_{t+1}$ are consecutive) or $(-i,+j)$, for some $i,j\in\Z_{\geq 0}$ (when, between $e_t$ and $e_{t+1}$, the interface path is traversing a face with $i+1$ edges on the left boundary and $j+1$ edges on the right boundary). We denote by $\Steps$ the set of possible increments, that is 
\begin{equation}
\label{eq:admis_steps}
\Steps = \{(+1,-1)\} \cup \{(-i,j), i\in \Z_{\geq 0}, j\in \Z_{\geq 0}\}.
\end{equation}
We denote by $\mathcal{W}$ the set of walks in the non-negative quadrant, starting at $(0,h)$ and ending at $(k,0)$ for some $h\geq 0, k\geq 0$, with increments in $\Steps$.

\begin{theorem}(\cite[Theorem 1]{MR3945746})
	The mapping $\bow:\mathcal{O}\to\mathcal{W}$ is a size-preserving bijection.
\end{theorem}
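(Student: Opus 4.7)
The plan is to establish the bijection by constructing an explicit inverse. The forward direction ($\bow(m) \in \mathcal W$) is essentially already in the preceding discussion: the bottom of $e_1$ is the source so $X_1 = 0$, the distance in $T(m^{**})$ from the top of $e_1$ to the sink is $h$ so $Y_1 = h$, by duality $W_{|m|} = (k, 0)$, non-negativity is automatic since tree distances are non-negative, and the increment case analysis given before the theorem statement shows that every step belongs to $\Steps$.

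For the inverse, I would exploit the remark following \cref{defn:KMSW_bij}: the two half-processes $(0, X_1+1, \dots, X_n+1)$ and $(0, Y_n+1, \dots, Y_1+1)$ are the height processes of $T(m)$ and $T(m^{**})$ respectively, and the standard tree--height encoding determines plane trees $T, T'$ uniquely from $W$. The substance is then to glue $T$ and $T'$ along the shared edge set $\{e_1,\dots,e_n\}$ to recover a plane bipolar orientation $m$. The interleaving prescribed by the walk fixes how corners of $T$ and $T'$ are to be identified: a step $(+1,-1)$ at time $t$ means $e_t$ and $e_{t+1}$ are consecutive edges in both trees and no inner face is closed, while a step $(-i,j)$ closes off a new inner face whose left contour consists of $e_t$ together with its $i$ closest ancestors in $T$ (matching the drop in $X$) and whose right contour consists of the $j+1$ analogous edges in $T'$. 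I would implement this as a left-to-right sweep maintaining a partial map and a ``current edge''; quadrant-confinement of $W$ is what guarantees at each step that the ancestors needed to close the next face are actually present.

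Having produced a candidate $m$, I would verify by induction on $n$ that $\bow(m) = W$ and, applied to an arbitrary $m \in \mathcal O$, that the construction returns $m$; size preservation is then immediate from the construction. The boundary conditions $W_1 \in \{0\}\times\Z_{\geq 0}$ and $W_n \in \Z_{\geq 0}\times\{0\}$ are exactly what force the constructed map to possess a unique source and sink, and acyclicity follows because both height processes are bounded below by $0$, preventing the sweep from revisiting an already-closed region.

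The main obstacle I anticipate is the planarity bookkeeping in the face-closing step: one must check that the $i+1$ left-contour and $j+1$ right-contour edges nominated at a step of type $(-i,j)$ genuinely bound a common unclosed region of the partial map at that moment, so the face can be inserted without creating crossings or orientation conflicts. This is the discrete-topological core of the theorem, and it is enforced precisely by the compatibility between the admissible step set $\Steps$ and the fact that $W$ never exits $\Z_{\geq 0}^2$.
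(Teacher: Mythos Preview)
The paper does not prove this theorem at all: it is stated with an explicit citation to \cite[Theorem 1]{MR3945746} and no argument is given. So there is no ``paper's own proof'' to compare against; the result is imported wholesale from Kenyon--Miller--Sheffield--Wilson.

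That said, your sketch is a faithful outline of how the original KMSW argument runs. The forward direction is indeed contained in the discussion preceding the statement, and the inverse in \cite{MR3945746} is exactly a left-to-right sweep that maintains a partial map with an active right boundary, appending either a single edge (step $(+1,-1)$) or a new face glued along $i+1$ boundary edges (step $(-i,j)$). Your identification of the ``planarity bookkeeping'' as the crux is accurate: the point is that the active boundary at time $t$ has exactly $X_t+1$ edges below the current edge and $Y_t+1$ above it, so the quadrant constraint $X_t, Y_t \geq 0$ is precisely what guarantees enough boundary edges are available to attach the next face. One small correction: the height process alone does not quite determine a plane tree without the convention that siblings are visited left-to-right, but that convention is implicit here since the interface path is a clockwise contour. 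If you intend to actually write out this proof rather than cite it, the cleanest route is to verify by induction on $n$ that the sweep and $\bow$ are mutually inverse, tracking the active boundary explicitly; this is what KMSW do.
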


\begin{example}\label{exemp:walk}
	We consider the map $m$ in \cref{fig:bip_orient _with_trees}. The corresponding walk $\bow(m)$ is:
	\begin{equation*}
	\Big((0,2),(0,3),(0,3),(1,2),(2,1),(0,3),(1,2),(2,1),(3,0),(2,0)\Big).
	\end{equation*}
\end{example}

\subsection{Baxter permutations and bipolar orientations}
\label{sect:Baxt_bipol}

In \cite{MR2734180}, a bijection between Baxter permutations and bipolar orientations is given. We give here a slightly different formulation of this bijection (more convenient for our purposes) and then in \cref{lem:equiv_bij} we state that the two formulations are equivalent.

\begin{definition} \label{defn:bobp}
	Let $m$ be a bipolar orientation of size $n\geq 1$. Recall that to every edge of the map $m$ corresponds its dual edge in the dual map $m^*$. The Baxter permutation $\bobp(m)$ associated with $m$ is the only permutation $\pi$ such that for every $1\leq i \leq n$, the $i$-th edge visited in the exploration of $T(m)$ corresponds to the $\pi(i)$-th edge visited in the exploration of $T(m^*)$. We say that this edge \textit{corresponds} to the index $i$.
\end{definition}
An example is given in \cref{fig:bip_orient_and_perm}. The following result proves that $\bobp$ is a bijection.
	
\begin{figure}[htbp]
	\begin{minipage}[c]{0.6\textwidth}
		\centering
		\includegraphics[scale=0.47]{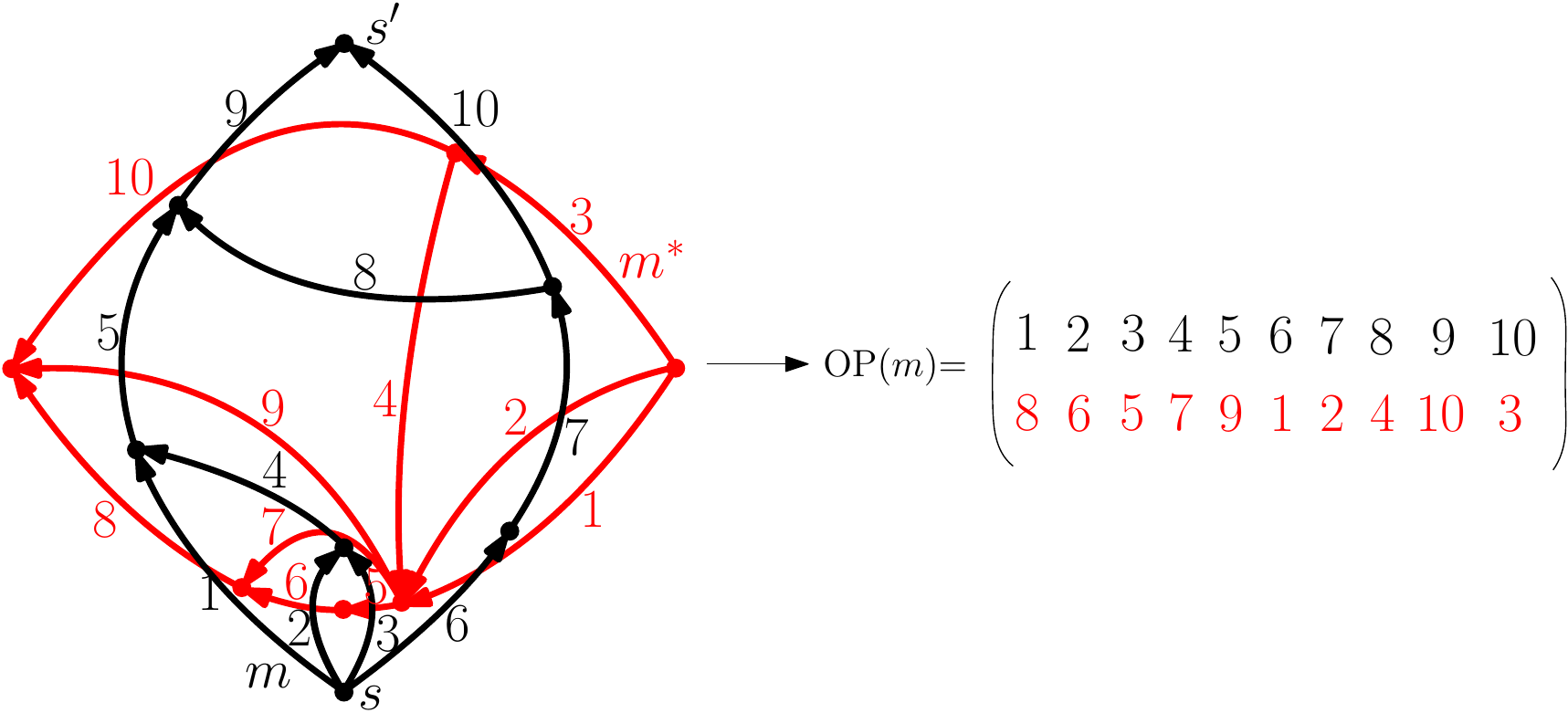}
	\end{minipage}
	\begin{minipage}[c]{0.4\textwidth}
		\caption{\textbf{Left:} The bipolar orientation $m$ and its dual $m^*$, already considered in \cref{fig:bip_orient}. We plot in black the labeling of the edges of $m$ obtained in \cref{fig:bip_orient _with_trees} and in red the labeling of the edges of $m^*$ obtained using the same procedure used for $m$. \textbf{Right:} The permutation $\bobp(m)$ obtained by pairing the labels of the corresponding primal and dual edges between $m$ and $m^*$.  \label{fig:bip_orient_and_perm}}
	\end{minipage}
\end{figure}

\begin{lemma}
	\label{lem:equiv_bij}
	The function $\bobp:\mathcal{O}\to\mathcal{P}$ is equal to the function $\Psi:\mathcal{O}\to\mathcal{P}$ defined in \cite[Section 3.2]{MR2734180}. Therefore $\bobp$ is a size-preserving bijection.
\end{lemma}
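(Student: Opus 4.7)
The plan is to unpack the Bonichon--Bousquet-Mélou--Fusy construction $\Psi$ from \cite[Section 3.2]{MR2734180} and match it with \cref{defn:bobp} edge-by-edge. Recall that $\Psi$ assigns to each edge $e$ of $m$ a pair of indices $(\alpha(e),\beta(e))$ where $\alpha(e)$ is the rank of $e$ in a certain ``bottom-to-top, left-to-right'' exploration of the primal map and $\beta(e)$ is the rank of the dual edge $e^*$ in the analogous exploration of $m^*$ (where ``top/bottom'' and ``left/right'' are rotated by $90^\circ$ because of the duality convention, namely dual edges oriented from right to left). The associated permutation $\Psi(m)$ is then the unique $\pi\in\Perms_{|m|}$ with $\pi(\alpha(e))=\beta(e)$ for every edge $e$. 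The task is therefore reduced to identifying these two labelings with the orderings produced by the clockwise contour explorations of the down-right trees $T(m)$ and $T(m^*)$.

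The first step is to show that the clockwise contour exploration of $T(m)$, equivalently the interface path described in \cref{sect:KMSW}, visits the edges of $m$ in the same order as $\alpha$. Here the key combinatorial lemma is a local one at each non-polar vertex $v$: the parent-in-$T(m)$ convention (right-most incoming edge) together with the clockwise contour order dictates that, after having exited $v$ from some outgoing edge $e$, the next visit to $v$ occurs through the incoming edge immediately to the right of $e$ in the cyclic order around $v$, and all outgoing edges of $v$ are visited consecutively in the left-to-right order. This is exactly the local rule used to build $\alpha$ in \cite{MR2734180}. An induction on $t$, together with the observation that the interface path starts at the source $s$ and ends just before the sink $s'$, then matches the two orderings globally.

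The second step is the analogue for the dual: the clockwise contour exploration of $T(m^*)$ visits the dual edges of $m^*$ in the order $\beta$. This follows by applying the first step verbatim to $m^*$, once we have observed that $m^*$ is itself a plane bipolar orientation with source (respectively sink) the right (respectively left) external face of $m$, and that the dualization convention (edges from right to left) is exactly the $90^\circ$ rotation under which the ``bottom-to-top'' exploration of \cite{MR2734180} for $m^*$ becomes the ``right-to-left'' one used in $\Psi$. Combined with step one, this yields that the $i$-th edge visited by the contour of $T(m)$ is the primal of the $\pi(i)$-th edge visited by the contour of $T(m^*)$, which is precisely \cref{defn:bobp}.

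The main obstacle is the first step, i.e.\ rigorously proving that the right-most-incoming-edge parent rule in $T(m)$ produces an exploration order matching the primal labeling of $\Psi$; the rest is bookkeeping and symmetry. Once the two labelings have been identified, equality $\bobp=\Psi$ is immediate, and the bijectivity and size-preservation of $\bobp$ follow from the corresponding properties of $\Psi$ established in \cite[Section 3.2]{MR2734180}.
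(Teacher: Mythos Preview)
Your sketch rests on a description of $\Psi$ that does not match the one in \cite{MR2734180} (as the paper itself records right after the statement). In the Bonichon--Bousquet-M\'elou--Fusy bijection, the second coordinate $\beta(e)$ is \emph{not} obtained from an exploration of the dual $m^*$; it is obtained from an exploration of the mirror $m^{-1}$ (the symmetry of $m$ along the vertical axis). In other words, $\Psi$ is defined exactly like $\bobp$ in \cref{defn:bobp}, except that $T(m^*)$ is replaced by $T(m^{-1})$. Your ``second step'' therefore assumes away precisely the point that needs proving: that the exploration of $T(m^*)$ and the exploration of $T(m^{-1})$ visit the edges of $m$ in the same order. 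This is the actual content of the lemma, and your proposal does not touch it.

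The paper's route is different and shorter: it observes directly that $\Psi$ and $\bobp$ agree on the first coordinate (both use the exploration of $T(m)$), and reduces the second coordinate to the combinatorial identity that $T(m^*)$ and $T(m^{-1})$ have the same edge order. This is argued by noting that the two trees are related by a classical tree bijection---the \L ukasiewicz walk of $T(m^*)$ is the reversal of the height function of $T(m^{-1})$---which preserves the induced linear order on the edges. If you want to repair your argument, the missing ingredient is exactly this: a proof that the down-right tree of the dual and the down-right tree of the mirror induce the same edge labeling, which is not a symmetry but a genuine (if classical) tree identity.
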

The definition of $\Psi$ is the same as that of $\bobp$, with $T(m^*)$ replaced by $T(m^{-1})$, $m^{-1}$ denoting the symmetry of $m$ along the vertical axis. So the proof (that we omit) amounts to showing that these two trees visit the edges of $m$ in the same order\footnote{Actually they are related by a classic bijection between trees: the Lukasiewicz walk of $T(m^*)$ is the reversal of the height function of $T(m^{-1})$.}. 

\subsection{Discrete coalescent-walk processes}\label{sect:discrete_coal_proc}

Since the key ingredient for permuton convergence is the extraction of patterns (see \cref{prop:perm_charact}), we introduce in this section a new tool in order to ``extract patterns from the plane walk'' that encodes a Baxter permutation, namely \emph{coalescent-walk processes}. Then, in \cref{sect: bij_walk_coal}, we present a bijection between walks in the non-negative quadrant and a specific kind of coalescent-walk processes, and in \cref{sect:from_coal_to_perm},  we introduce a bijection between these coalescent-walk processes and Baxter permutations. Composing these two mappings we obtain another bijection between walks in the non-negative quadrant and Baxter permutations. Finally, in \cref{sect:equiv_bij} we complete the proof of \cref{thm:keyres}.

\begin{definition}
	\label{def:discrete_coal_process}
	Let $I$ be a (finite or infinite) interval of $\Z$. We call \emph{coalescent-walk process} over $I$ a family $\{(Z^{(t)}_s)_{s\geq t, s\in I}\}_{t\in I}$ of one-dimensional walks such that
	\begin{itemize}
		\item the walk $Z^{(t)}$ starts at zero at time $t$, i.e. $Z^{(t)}_t = 0$;
		\item if $Z^{(t)}_k\geq Z^{(t')}_k$ (resp. $Z^{(t)}_k\leq Z^{(t')}_k$) at some time $k,$ then $Z^{(t)}_{k'}\geq Z^{(t')}_{k'}$ (resp. $Z^{(t)}_{k'}\leq Z^{(t')}_{k'}$) for every $k'\geq k$.
	\end{itemize}
\end{definition}

Note that, as a consequence, if $Z^{(t)}_k= Z^{(t')}_k$, at time $k,$ then $Z^{(t)}_{k'}=Z^{(t')}_{k'}$ for every $k'\geq k$. In this case, we say that $Z^{(t)}$ and $Z^{(t')}$ are \emph{coalescing} and call \emph{coalescent point} of $Z^{(t)}$ and $Z^{(t')}$ the point $(\ell,Z^{(t)}_\ell)$ such that  $\ell=\min\{k\geq \max\{t,t'\}|Z^{(t)}_{k}=Z^{(t')}_{k}\}$. We denote by $\Coals(I)$ the set of coalescent-walk processes over some interval $I$.

\subsubsection{The coalescent-walk process corresponding to a plane walk}
\label{sect: bij_walk_coal}
We now introduce a particular family of coalescent-walk processes of interest for us.
Let $I$ be a (finite or infinite) interval of $\Z$. Recall the definition of $\Steps$ from \cref{eq:admis_steps} page \pageref{eq:admis_steps}, and let $\Walks_\Steps(I)$ be the set of plane walks of time space $I$ (functions $I\to \Z^2$) with increments in $\Steps$.

Take $W\in\Walks_\Steps(I)$ and denote $W_t = (X_t,Y_t)$ for $t\in I$.
From $X$ and $Y$ we construct the family of walks $\{Z^{(t)}\}_{t\in I}$, called the coalescent-walk process associated with $W,$ by
\begin{itemize}
	\item for $t\in I$, $Z^{(t)}_t=0;$
	\item for $t\in I$ and $k\in I\cap[t+1,+\infty),$
	\begin{equation}\label{eq:increments}
		Z^{(t)}_{k}=\begin{cases}Z^{(t)}_{k-1}+(Y_{k}-Y_{k-1}), &\quad\text{if}\quad Z^{(t)}_{k-1}\geq0,\\
		Z^{(t)}_{k-1}-(X_{k}-X_{k-1}),&\quad\text{if}\quad Z^{(t)}_{k-1}<0\text{ and }Z^{(t)}_{k-1}-(X_{k}-X_{k-1})<0,\\
		Y_{k}-Y_{k-1},&\quad\text{if}\quad Z^{(t)}_{k-1}<0\text{ and }Z^{(t)}_{k-1}-(X_{k}-X_{k-1})\geq0.
		\end{cases}
	\end{equation}
\end{itemize}
 
Let $\wcp: \Walks_\Steps(I) \to \Coals(I)$ map each $W\in \Walks_\Steps(I)$ to the corresponding coalescent-walk process, i.e.\ $\wcp(W)=\{Z^{(t)}\}_{t\in I}$. We also set $\mathcal{C}_n = \wcp(\mathcal W_n) \subset \Coals([n])$ and $\mathcal{C}=\cup_{n\in \Z_{\geq 0}}\mathcal{C}_n.$

\medskip

We give a heuristic explanation of this construction in the following example.
\begin{example}
	We consider the plane walk $W=(W_t)_{t\in [10]}$ starting at $(0,0)$ on the left-hand side of \cref{example_discrete_coal}. We plot in the second diagram of \cref{example_discrete_coal} the walks $Y$ in red and $-X$ in blue. We now explain how we reconstruct the ten walks $\{Z^{(t)}\}_{1\leq t\leq 10}$ (in green on the right-hand side of \cref{example_discrete_coal}). The walk $Z^{(t)}$ starts at height zero at time $t$.
	Then, 
	\begin{itemize}
		\item If $Z^{(t)}_{k-1}$ is non-negative (in particular at the starting point), then the increment $Z^{(t)}_{k}-Z^{(t)}_{k-1}$ is the same as the one of the red walk.
		\item If $Z^{(t)}_{k-1}$ is negative, then the increment $Z^{(t)}_{k}-Z^{(t)}_{k-1}$ is the same as the one of the blue walk, as long as this increment keeps $Z^{(t)}_{k}$ negative.
		\item Now if at time $k-1,$ $Z^{(t)}_{k-1}$ is negative but the blue increment would ``force'' it to cross (or touch) the $x$-axis (that is if $X_{k}-X_{k-1}\leq Z^{(t)}_{k-1}<0$), then $Z^{(t)}_{k}$ is equal to $Y_{k}-Y_{k-1}$ (i.e. $Z^{(t)}$ coalesces with $Z^{(k-1)}$ at time $k$). For instance this is the case of the second increment of the walk $Z^{(7)}$. 
	\end{itemize} 
	
	\begin{figure}[b]
		\begin{minipage}[c]{0.7\textwidth}
			\centering
			\includegraphics[scale=.5]{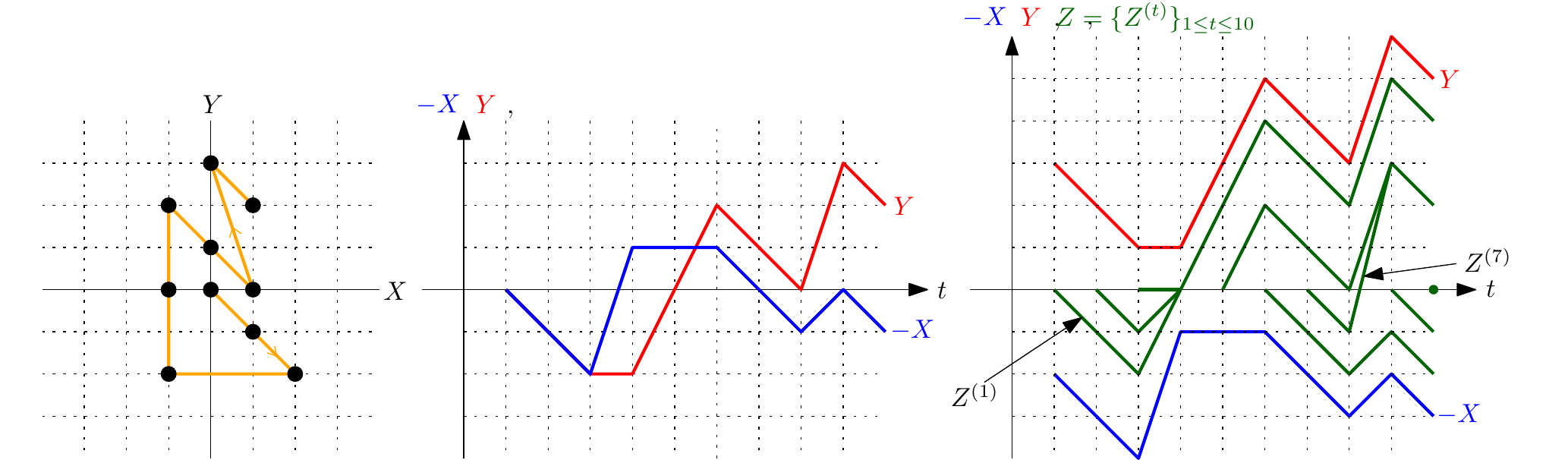}
		\end{minipage}
		\begin{minipage}[c]{0.29\textwidth}
			\caption{\textbf{Left:} A plane walk $(X,Y)$ starting at (0,0). \textbf{Middle:} The diagram of the walks $Y$ (in red) and $-X$ (in blue). \textbf{Right:} The two walks are shifted (one towards the top and one to the bottom) and the ten walks of the coalescent-walk process are plotted in green. \label{example_discrete_coal}}
		\end{minipage}
	\end{figure}
\end{example}

\begin{observation}
	The $y$-coordinates of the coalescent points of a coalescent-walk process in $\mathcal{C}(I)$ are non-negative. 
\end{observation}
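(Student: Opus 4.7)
The plan is a direct case analysis on the update rule \eqref{eq:increments} at the coalescence time $\ell$, exploiting the constraint built into the admissible step set $\Steps$ that steps with $X$-increment $\neq +1$ automatically have non-negative $Y$-increment.

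First I would reduce to the non-trivial case. Fix two walks $Z^{(t)},Z^{(t')}$ of the coalescent-walk process, with coalescent time $\ell$ and common value $c := Z^{(t)}_\ell = Z^{(t')}_\ell$. If $\ell = \max(t,t')$, say $\ell=t'$, then $c = Z^{(t')}_{t'} = 0 \geq 0$ and we are done. Otherwise $\ell > \max(t,t')$, and by minimality of $\ell$ we have $Z^{(t)}_{\ell-1} \neq Z^{(t')}_{\ell-1}$; the monotonicity clause in \cref{def:discrete_coal_process} then lets me assume $Z^{(t)}_{\ell-1} > Z^{(t')}_{\ell-1}$ without loss of generality.

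Next, I would show that coalescence at time $\ell$ forces at least one of the two walks to use the third branch (the ``jump-up'' rule) in \eqref{eq:increments}. Indeed, if neither walk uses the jump, then each walk's update is an additive function of $W_\ell - W_{\ell-1}$ that depends only on the sign regime, so a short check of the three remaining sign combinations (both non-negative; both negative and both staying negative; or one of each) shows that $Z^{(t)}_\ell - Z^{(t')}_\ell = Z^{(t)}_{\ell-1} - Z^{(t')}_{\ell-1} > 0$, contradicting coalescence.

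Thus some walk $Z^{(s)}$ with $s\in\{t,t'\}$ must apply the jump branch at time $\ell$, which by the third line of \eqref{eq:increments} requires $X_\ell - X_{\ell-1} \leq Z^{(s)}_{\ell-1} < 0$. In particular the increment $W_\ell - W_{\ell-1}$ has negative $X$-component, so by the definition of $\Steps$ in \eqref{eq:admis_steps} it must be of the form $(-i,j)$ with $j \geq 0$. The jump branch then yields $Z^{(s)}_\ell = Y_\ell - Y_{\ell-1} = j \geq 0$, and $c=Z^{(s)}_\ell\geq 0$ follows. I do not anticipate any real obstacle beyond organizing the branch combinations cleanly; the only subtlety is confirming that in the mixed scenario (one walk at height exactly $0$ using the first branch, the other jumping up) both walks indeed land on the common value $Y_\ell-Y_{\ell-1}$, which is immediate.
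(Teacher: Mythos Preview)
The paper states this as a bare observation with no proof, so there is no argument to compare against; your case analysis is the natural way to justify it and is essentially correct.

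One small imprecision: in the mixed sub-case where $Z^{(t)}_{\ell-1}\geq 0$ and $Z^{(t')}_{\ell-1}<0$ with neither walk jumping, the equality $Z^{(t)}_\ell - Z^{(t')}_\ell = Z^{(t)}_{\ell-1} - Z^{(t')}_{\ell-1}$ that you assert need not hold. If the step is $(-i,j)$ then the upper walk adds $j$ while the lower adds $i$, so the gap changes by $j-i$. The conclusion is still fine, because in that situation $Z^{(t)}_\ell = Z^{(t)}_{\ell-1}+j \geq 0$ while $Z^{(t')}_\ell = Z^{(t')}_{\ell-1}+i < 0$ (the latter being exactly the branch-2 condition), so coalescence is impossible; and for the step $(+1,-1)$ both walks subtract $1$ and the gap is genuinely preserved. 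Adjusting that one line makes the argument complete.
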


\subsubsection{The permutation associated with a coalescent-walk process}\label{sect:from_coal_to_perm}

Given a coalescent-walk process $Z = \{Z^{(t)}\}_{t\in I}$ defined on a (finite or infinite) interval $I$, the relation $\leq_Z$ on $I$ defined as follows is a total order (we skip the proof of this fact): 
\begin{equation}\label{eq:coal_to_perm}
i\leq_Z j\quad\iff \quad
 \{i<j \text { and } Z^{(i)}_j<0\}\quad
\text{or}\quad \{j<i \text { and } Z^{(j)}_i\geq0\}\quad
\text{or}\quad \{i=j\}.
\end{equation}
This definition allows to associate a permutation to a coalescent-walk process.

\begin{definition}Fix $n\in\Z_{\geq 0}$. Let $Z = \{Z^{(t)}\}_{t\in [n]} \in \mathcal C_n$ be a coalescent-walk process over $[n]$. Denote $\cpbp(Z)$ the permutation $\sigma \in \Perms_n$ such that for $1\leq i, j\leq n$, 
	$\sigma(i)\leq\sigma(j) \iff i\leq_Z j$.
\end{definition}

We have that pattern extraction in the permutation $\cpbp(Z)$ depends only on a finite number of trajectories, a key step towards proving permuton convergence.

\begin{proposition}
	\label{cor:patterns}
	Let $\sigma$ be a permutation obtained from a coalescent-walk process $Z=\{Z^{(t)}\}_{t\in[n]}$ via the map $\cpbp$. Let $I=\{i_1<\dots<i_k\} \subset [n]$. Then\footnote{See \cref{sect:perm_conv} for notation on patterns of permutations.} $\pat_I(\sigma)=\pi$ if the following condition holds: for all $1\leq \ell< s \leq k,$ $Z^{(i_\ell)}_{i_s} \geq 0  \iff  \pi(s)<\pi(\ell).$
\end{proposition}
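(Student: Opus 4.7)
The statement is essentially a direct unpacking of the definitions of $\cpbp$, the total order $\leq_Z$, and pattern extraction, restricted to the sub-case where all pairs of indices in $I$ are compared with $\ell < s$ (so that $i_\ell < i_s$). Thus the plan is very short.

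First I would fix $\ell < s$ in $\{1,\dots,k\}$, note that then $i_\ell < i_s$, and specialize the definition \cref{eq:coal_to_perm} of $\leq_Z$. Since $i_\ell \neq i_s$ and $i_\ell < i_s$, only the first disjunct in \cref{eq:coal_to_perm} can apply for the pair $(i_\ell, i_s)$, so
\[
i_\ell \leq_Z i_s \quad \iff \quad Z^{(i_\ell)}_{i_s} < 0.
\]
By the definition of $\cpbp$, this is equivalent to $\sigma(i_\ell) \leq \sigma(i_s)$, and since $\sigma$ is a permutation (so $\sigma(i_\ell) \neq \sigma(i_s)$), to $\sigma(i_\ell) < \sigma(i_s)$. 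Taking the contrapositive,
\[
Z^{(i_\ell)}_{i_s} \geq 0 \quad \iff \quad \sigma(i_s) < \sigma(i_\ell).
\]

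Second, I would recall that $\pat_I(\sigma)$ is defined as the unique permutation $\pi \in \Perms_k$ of order-type of $(\sigma(i_1), \dots, \sigma(i_k))$, which means $\pi(s) < \pi(\ell) \iff \sigma(i_s) < \sigma(i_\ell)$ for all $\ell,s$. Combining with the equivalence above gives exactly the condition of the proposition, so $\pat_I(\sigma) = \pi$ follows.

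There is really no obstacle here: the whole content is that the ordering of $\{\sigma(i_\ell)\}_\ell$ is read off from the signs $Z^{(i_\ell)}_{i_s}$ for $\ell < s$, which is built into the definition of $\leq_Z$. The only mild care is to observe that in \cref{eq:coal_to_perm} the second clause (involving $Z^{(j)}_i$) is not needed when $I$ is listed in increasing order, so exactly the one-sided condition stated in the proposition suffices.
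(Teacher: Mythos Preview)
Your argument is correct. The paper states this proposition without proof, as it is indeed a direct unpacking of the definitions of $\leq_Z$ in \cref{eq:coal_to_perm}, of $\cpbp$, and of $\pat_I$; your write-up carries this out cleanly, including the observation that for $i_\ell<i_s$ only the first clause of \cref{eq:coal_to_perm} is relevant.
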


We end this section with the following observation. Note that given a coalescent-walk process on $[n]$, the plane drawing of the trajectories $\{Z^{(t)}\}_{t\in I}$ identifies a natural tree structure $\tree(Z)$ as follows (see for instance the middle and right-hand side of \cref{fig:bip_orient_with_coal}):
\begin{itemize}
	\item vertices of $\tree(Z)$ correspond to points $1,\ldots,n$ on the $x$-axis, plus a root.
	\item Edges are portions of trajectories starting at the right of a vertex $i$ and interrupted at the first encountered new vertex. Trajectories that do not encounter a new vertex before time $n$ are connected to the root. The label $i$ is also carried by the edge at the right of $i$.
\end{itemize}

\begin{remark}\label{rk:cpbp_through_fortree}
	In the case where $I=[n]$ for some $n\in\Z_{\geq 0}$, the permutation $\pi = \cpbp(Z)$ is readily obtained from $\tree(Z)$: it is enough to label the points $1,\dots,n$ on the $x$-axis of the diagram of the colaescent-walk process $Z$ (these labels are painted in purple in the middle picture of \cref{fig:bip_orient_with_coal}) according to the exploration process of $\tree(Z)$ and then to read these labels from left to right.
\end{remark}

\begin{figure}[htbp]
	\centering
	\includegraphics[scale=0.6]{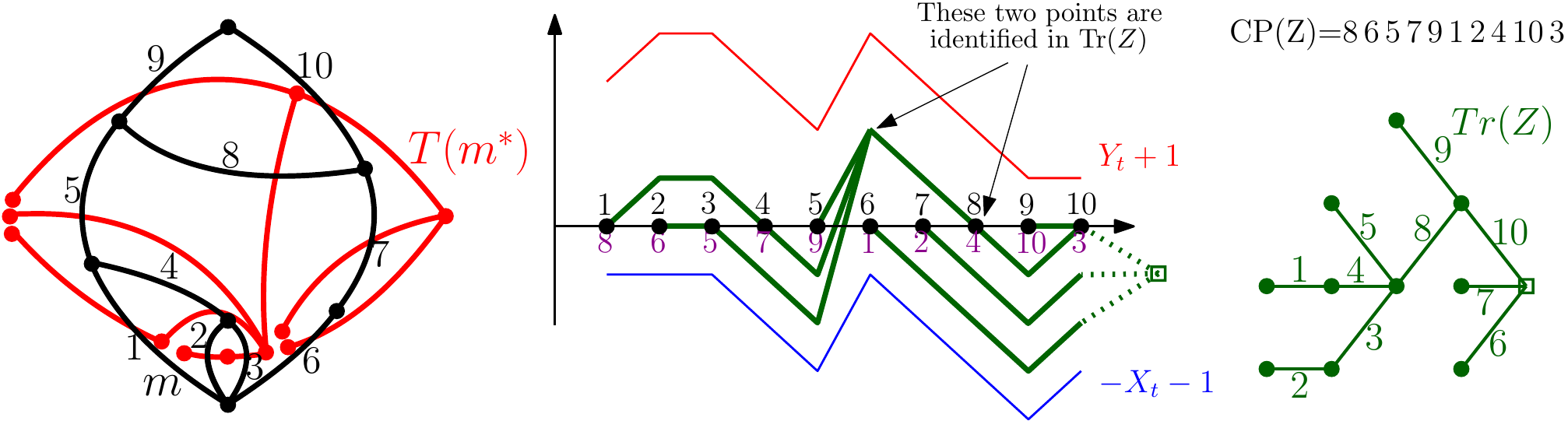}
	\caption{On the left-hand side the map $m$ from \cref{fig:bip_orient_and_perm}. In the middle the associated coalescent-walk process $Z=\wcp\circ\bow(m)$ that naturally determines the tree $\tree(Z)$ (shown on the right). Note that the exploration of $\tree(Z)$ gives the inverse permutation $\cpbp(Z)^{-1}=6\,7\,10\,8\,3\,2\,4\,1\,5\,9$.
	\label{fig:bip_orient_with_coal}}
\end{figure}

\subsection{From plane walks to Baxter permutations via coalescent-walk processes.}
\label{sect:equiv_bij}

We sketch here the proof of \cref{thm:keyres}.
The key ingredient is to show that the dual tree $T(m^*)$ of a bipolar orientation can be recovered from its encoding plane walk by building the associated coalescent-walk process $Z$ and looking at the corresponding tree $\tree(Z)$.
More precisely, let $W = (W_t)_{1\leq t\leq n} = \bow (m)$ be the walk encoding a given bipolar orientation $m$, and $Z = \wcp(W)$ be the corresponding coalescent-walk process. Then the following result, illustrated by an example in \cref{fig:bip_orient_with_coal}, holds.

\begin{proposition}\label{prop:eq_trees} 
		The tree $\tree(Z)$ is equal to the dual tree $T(m^*)$ with edges labeled according to the order given by the exploration of $T(m)$.
\end{proposition}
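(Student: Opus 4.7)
Plan. The proof identifies each trajectory $Z^{(t)}$ with a path in $T(m^*)$ living on the ancestor chain of $e_t^*$, and then matches parent-child relations on the level of labeled trees. I would proceed in three blocks.

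First, I would spell out the parent relation in $T(m^*)$ directly in terms of $m$. Since the dual source is the right external face and the dual orientation runs right-to-left, the parent of $e_t^*$ is, by the ``rightmost incoming'' rule applied in $m^*$, the dual of a distinguished primal edge lying on the right boundary of the right face $F_t^R$ of $e_t$. I would translate this into a rule phrased purely in terms of the interface path of $m$: roughly, the dual parent of $e_t^*$ is the dual of the next primal edge encountered by walking up the right boundary of $F_t^R$, and, when $e_t$ is topmost on this boundary, a specific edge reachable by pursuing the right-boundary excursion past the top vertex of $F_t^R$.

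Second, I would rewrite $W=(X,Y)$ in face-traversal terms, exploiting the KMSW description. An increment $(+1,-1)$ between times $k-1$ and $k$ corresponds to $e_k$ being a child of $e_{k-1}$ in $T(m)$ with no face traversal, while an increment $(-i,j)$ corresponds to the interface path crossing a face with $i+1$ left-boundary edges (ending at $e_{k-1}$) and $j+1$ right-boundary edges (starting at $e_k$). In this picture, the coordinates $X_{k-1}$ and $Y_{k-1}$ index positions along the sides of whichever face is currently being opened or closed.

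The key step is to prove, by induction on $k-t$, a precise geometric meaning of $Z^{(t)}_k$: when $Z^{(t)}_k\ge 0$, the interface path is currently tracking a specific dual edge sitting on the ancestor chain of $e_t^*$ in $T(m^*)$ at dual-height $Z^{(t)}_k$ above $e_t^*$; when $Z^{(t)}_k<0$, the interface path has temporarily strayed into a dual subtree that will later be closed up by a coalescence. I would verify this by matching the three branches of \eqref{eq:increments} to three map-theoretic moves: the $Y$-branch (stay non-negative) corresponds to progressing one step along the right boundary of the currently open face and hence one step up the dual ancestor chain; the $-X$-branch (stay negative) corresponds to left-boundary progress within an open face; and the third ``reset'' branch is exactly where $Z^{(t)}$ coalesces with the trajectory recording the common dual ancestor. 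Reading off first-coalescence times then identifies, for each $t$, the smallest $s<t$ with $e_s^*$ on the ancestor chain of $e_t^*$, i.e., the dual parent of $e_t^*$ in $T(m^*)$.

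Having matched parent-child relations, $\tree(Z)$ and $T(m^*)$ are isomorphic as labeled trees; the labels on $T(m^*)$ are by definition the exploration order of $T(m)$ (via the correspondence $e_t\leftrightarrow e_t^*$), and the labels on $\tree(Z)$ are the starting times $t$ of the trajectories, which coincide with this order by construction of $\wcp\circ\bow$. This yields the proposition.

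The main obstacle will be the third step, that is, formulating and verifying the geometric invariant encoded by $Z^{(t)}_k$. The delicate case is the third branch of \eqref{eq:increments} (with $Z^{(t)}_{k-1}<0$ and $Z^{(t)}_{k-1}-(X_k-X_{k-1})\ge 0$): this is where coalescences occur and a face traversal closes onto the top vertex of the traversed face, so matching its triggering condition with the event ``$e_t^*$ meets its dual parent (or a dual sibling's chain) at time $k$'' demands careful bookkeeping of right-boundary paths of faces against the nested structure of the ancestor chain in $T(m^*)$.
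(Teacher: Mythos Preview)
The paper does not actually prove this proposition: it states that ``the proof requires a lot more notation so we skip it in this extended abstract.'' There is therefore no written argument to compare against, only the statement that a proof exists and is notationally heavy.

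Your overall architecture---translate the parent relation of $T(m^*)$ into primal/interface-path language, attach a map-theoretic meaning to $Z^{(t)}_k$, and verify it by induction over the three branches of \eqref{eq:increments}---is the natural route and almost certainly what the authors have in mind for the full version. Identifying the ``reset'' branch as the crux is also correct.

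One point in your sketch needs repair. In $\tree(Z)$ the edge out of vertex $t$ runs \emph{forward} along $Z^{(t)}$ and terminates at the first $j>t$ with $Z^{(t)}_j=0$; hence the parent of $t$ carries a larger label (or is the root). Correspondingly, in $T(m^*)$ with edges labeled by the $T(m)$-exploration, parents also carry larger labels: the root of $T(m^*)$ is the right outer face, and the interface path of $T(m)$ reaches the edges bordering that face last. Your sentence ``the smallest $s<t$ with $e_s^*$ on the ancestor chain of $e_t^*$, i.e., the dual parent'' therefore has the inequality the wrong way round. This also shows the invariant ``$e_k^*$ sits at dual-height $Z^{(t)}_k$ above $e_t^*$'' cannot be literally correct: at the first $k>t$ with $Z^{(t)}_k=0$ it would give $e_k^*=e_t^*$. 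The workable invariant has to record the position of $e_k^*$ relative to $e_t^*$ in the planar embedding of $T(m^*)$ (equivalently, their relative order in the $T(m^*)$-exploration, which is exactly what \eqref{eq:coal_to_perm} encodes), not merely height along a single ancestor line. Once that is stated precisely, your three-branch induction should go through.
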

The proof requires a lot more notation so we skip it in this extended abstract.  \Cref{thm:keyres} then follows immediately, by construction of $\bobp(m)$ from $T(m^*)$ and $T(m)$ (\cref{defn:bobp}) and of $\cpbp(Z)$ from $\tree(Z)$ (\cref{rk:cpbp_through_fortree}).

\section{Convergence to the Baxter permuton}\label{sect:coal_and_perm_conv}
We start this section by representing a uniform random walk in $\mathcal W_n$ as a conditioned random walk.
For all $n\geq 2$, let $\Walks^{A,\exc}_n$ be the set of plane walks $(W_t)_{0\leq t\leq n-1}$ of length $n$ that stay in the non-negative quadrant, starting and ending at $(0,0)$, with increments in $A$ (defined in \cref{eq:admis_steps}). Remark that for $n\geq 1$, the mapping $\mathcal \Walks^{A,\exc}_{n+2} \to \mathcal W_{n}$ removing the first and the last step, i.e.\ $W \mapsto (W_{t})_{1\leq t \leq n}$,
is a bijection. Recall also that $\bar{\bm W}$ denotes the walk defined below \cref{eq:walk_distrib}. An easy calculation then gives the following (observed also in \cite[Remark 2]{MR3945746}):

\begin{proposition}[]\label{prop:unif_law}
	Conditioning on $\{(\bar{\bm W}_t)_{0\leq t\leq {n+1}}  \in \mathcal \Walks^{A,\exc}_{n+2}\}$, the law of  $(\bar{\bm W}_t)_{0\leq t \leq n+1}$ is the uniform distribution on $\Walks^{A,\exc}_{n+2}$, and the law of $(\bar{\bm W}_t)_{1\leq t \leq n}$ is the uniform distribution on $\mathcal W_n$.
\end{proposition}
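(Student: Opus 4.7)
The plan is to reduce both statements to a single computation: checking that under the unconditioned walk $\bar{\bm W}$ with step distribution $\nu$, every trajectory in $\Walks^{A,\exc}_{n+2}$ carries exactly the same probability mass. Once this ``equidistribution of admissible excursions'' is verified, conditioning on the event $\{(\bar{\bm W}_t)_{0\leq t\leq n+1}\in\Walks^{A,\exc}_{n+2}\}$ trivially yields the uniform distribution on $\Walks^{A,\exc}_{n+2}$, and the uniform distribution on $\mathcal W_n$ is then transported along the already mentioned bijection $\Walks^{A,\exc}_{n+2}\to\mathcal W_n$ obtained by deleting the first and last step.

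The core step is the probability computation. Fix a trajectory $w=(w_0,\dots,w_{n+1})\in\Walks^{A,\exc}_{n+2}$ (so $w_0=w_{n+1}=(0,0)$, all $w_t$ lie in $\Z_{\geq 0}^2$, and each increment $w_{t+1}-w_t$ lies in $A$). Let $a$ denote the number of ``diagonal'' increments $(+1,-1)$ and let $b=n+1-a$ denote the number of ``reset'' increments of type $(-i_k,j_k)$ with $i_k,j_k\geq 0$, for $k=1,\dots,b$. Because $w$ is an excursion from $(0,0)$ to $(0,0)$, summing the $X$-increments gives $a-\sum_{k=1}^{b} i_k=0$ and summing the $Y$-increments gives $-a+\sum_{k=1}^{b} j_k=0$; hence $\sum i_k = \sum j_k = a$. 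Plugging this into the product form of the law of $\bar{\bm W}$ yields
\begin{equation*}
\Prob\bigl((\bar{\bm W}_t)_{0\leq t\leq n+1}=w\bigr)=\Big(\tfrac12\Big)^{a}\prod_{k=1}^{b} 2^{-i_k-j_k-3}
=2^{-a}\cdot 2^{-2a}\cdot 2^{-3b}=2^{-3(n+1)},
\end{equation*}
which is independent of $w$. Therefore the conditional law on $\Walks^{A,\exc}_{n+2}$ is uniform, proving the first claim.

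For the second claim, I would invoke the bijection $\Walks^{A,\exc}_{n+2}\to\mathcal W_n$ defined by $W\mapsto (W_t)_{1\leq t\leq n}$, already acknowledged in the paragraph preceding the proposition. Since a bijection pushes the uniform distribution on its domain to the uniform distribution on its codomain, the law of $(\bar{\bm W}_t)_{1\leq t\leq n}$ (which is the pushforward of the uniform law on $\Walks^{A,\exc}_{n+2}$) is uniform on $\mathcal W_n$. No serious obstacle is expected here: the only delicate point is the combinatorial bookkeeping that ensures $\sum i_k=\sum j_k=a$, but this is immediate from the excursion condition. The fact that the constant $2^{-3(n+1)}$ conveniently depends only on $n$ is precisely the reason the step distribution $\nu$ was calibrated as in \cref{eq:walk_distrib}.
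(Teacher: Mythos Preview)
Your proof is correct and is precisely the ``easy calculation'' the paper alludes to but omits. The key identity $\sum_k i_k=\sum_k j_k=a$, forced by the excursion endpoints, collapses the step probabilities to the path-independent constant $2^{-3(n+1)}$, and the bijection then transports uniformity to $\mathcal W_n$; there is nothing more to add.
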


As we said in the introduction, a key result to prove \cref{thm:baxter_permuton_conv} is to determine the scaling limit of coalescent-walk processes encoded by uniform elements of $\mathcal W_n$. Thanks to \cref{prop:unif_law} we can equivalently study coalescent-walk processes encoded by quadrant walks conditioned to start and end at $(0,0)$. We will first deal with the unconditioned case (see \cref{sect:uncond_case}) and then with the conditioned one (see \cref{sect:condit_case}).

\subsection{Scaling limits of coalescent-walk processes}\label{sect:coal_conv} 
We start by defining our continuous limiting object: it is formed by the solutions of the following family of stochastic differential equations (SDEs) indexed by $u\in \R$, driven by a two-dimensional process $\conti W = (\conti X,\conti Y)$
\begin{equation}\label{eq:flow_SDE}
\begin{cases}
d\conti Z^{(u)}(t) &= \idf_{\{\conti Z^{(u)}(t)> 0\}} d\conti Y(t) - \idf_{\{\conti Z^{(u)}(t)\leq 0\}} d \conti X(t), \quad t\geq u,\\
\conti Z^{(u)}(t)&=0,\quad  t\leq u.
\end{cases} 
\end{equation}
Existence and uniqueness of solutions were already studied in the literature in the case where the driving process $\conti W$ is a Brownian motion, in particular with the following result.
\begin{theorem}[Theorem 2 of \cite{MR3098074}, Proposition 2.2 of \cite{MR3882190}]\label{thm:ext_and_uni}
	\label{thm:uniqueness}
	Let $I$ be a (finite or infinite) interval of $\mathbb R$ and fix $t_0\in I$. Let $\conti W = (\conti X,\conti Y)$ denote a two-dimensional Brownian motion on  $I$ with covariance matrix $\begin{psmallmatrix}1 &\rho \\ \rho &1
	\end{psmallmatrix}$ for $\rho \in (-1,1)$. We have path-wise uniqueness (explained in 1 below) and existence (explained in 2 below) of a strong solution for the SDE:
	\begin{equation} \label{eq:SDE}
	\begin{cases}
	d \conti Z(t) &= \idf_{\{\conti Z(t)> 0\}} d \conti Y(t) - \idf_{\{\conti Z(t)\leq 0\}} d \conti X(t), \quad  t\in I\cap[t_0,+\infty),\\
	\conti Z(t_0)&=0.
	\end{cases} 
	\end{equation}
	Namely, letting $(\Omega, \mathcal F, (\mathcal F_t)_{t\in I}, \Prob)$ be a filtered probability space satisfying the usual conditions, and assuming that $(\conti X,\conti Y)$ is an $(\mathcal F_t)_t$-Brownian motion, 
	\begin{enumerate}
		\item if $\conti Z,\conti Z^\star$ are two $ (\mathcal F_t)_t$-adapted continuous processes that verify \cref{eq:SDE} almost surely, then $\conti Z=\conti Z^\star$ almost surely.
		\item There exists an $(\mathcal F_t)_t$-adapted continuous process $\conti Z$ which verifies \cref{eq:SDE} almost surely, and is adapted to the completion of the canonical filtration of $(\conti X,\conti Y)$.
	\end{enumerate}
\end{theorem}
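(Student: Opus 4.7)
The plan is to transform \cref{eq:SDE} into a one-dimensional perturbed Tanaka equation and then invoke the standard well-posedness theory for such equations. Since $\rho\in(-1,1)$, the processes
\[ \conti B_1(t) := \frac{\conti X(t)+\conti Y(t)}{\sqrt{2+2\rho}},\qquad \conti B_2(t):=\frac{\conti Y(t)-\conti X(t)}{\sqrt{2-2\rho}} \]
are independent standard $(\mathcal F_t)$-Brownian motions. Introducing the shorthand $\chi(z):=\idf_{\{z>0\}}-\idf_{\{z\leq 0\}}$ and using $\idf_{\{z>0\}}=\tfrac12(1+\chi(z))$, $\idf_{\{z\leq 0\}}=\tfrac12(1-\chi(z))$, equation \cref{eq:SDE} becomes, for $t\geq t_0$,
\[ d\conti Z(t)=\alpha\,\chi(\conti Z(t))\,d\conti B_1(t)+\beta\,d\conti B_2(t),\qquad \alpha:=\sqrt{(1+\rho)/2},\ \beta:=\sqrt{(1-\rho)/2}, \]
with $\alpha,\beta>0$ and $\alpha^2+\beta^2=1$; in particular, Lévy's characterization implies that any solution is itself a standard Brownian motion with respect to its own filtration.

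For weak existence I would regularize $\chi$ by a bounded Lipschitz approximant $\chi_\epsilon$, solve the resulting classical SDE (which is strongly well-posed) to get $\conti Z^\epsilon$, and extract a subsequential weak limit. Tightness is immediate from the uniform bound on the diffusion coefficient; identification of the limit of the stochastic integral $\int\alpha\,\chi_\epsilon(\conti Z^\epsilon)\,d\conti B_1$ with $\int\alpha\,\chi(\conti Z)\,d\conti B_1$ follows from the occupation-times formula, using the fact (anticipated above) that any weak limit is a Brownian motion and therefore its law does not charge $\{\conti Z_t=0\}$.

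The hard part is path-wise uniqueness, precisely because this is the property that fails for the unperturbed Tanaka SDE $d\conti Z=\chi(\conti Z)\,dB$. Given two strong solutions $\conti Z,\conti Z^\star$ on the same filtered space, the $\beta\,d\conti B_2$ contribution cancels in the difference $\conti D:=\conti Z-\conti Z^\star$, so
\[ d\conti D(t)=\alpha[\chi(\conti Z(t))-\chi(\conti Z^\star(t))]\,d\conti B_1(t), \]
which makes $\conti D$ a continuous martingale with $d\langle\conti D\rangle_t=4\alpha^2\idf_{\{\chi(\conti Z_t)\neq\chi(\conti Z^\star_t)\}}\,dt$. Applying the Itô-Tanaka formula to $|\conti D|$ and taking expectations reduces the claim to showing that the local time at $0$ of $\conti D$ vanishes identically. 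The decisive input is that both $\conti Z$ and $\conti Z^\star$ are standard Brownian motions, hence spend zero Lebesgue time at $0$: this forces $\langle\conti D\rangle_t=4\alpha^2\int_{t_0}^t\idf_{\{\conti Z_s\conti Z^\star_s<0\}}\,ds$, after which an occupation-time/Krylov-type estimate in the spirit of the arguments of \cite{MR3098074,MR3882190} yields $L^0(\conti D)\equiv 0$, and hence $\conti Z\equiv\conti Z^\star$.

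Finally, combining weak existence with path-wise uniqueness via the Yamada-Watanabe theorem produces a strong solution, automatically adapted to the completed canonical filtration of $(\conti X,\conti Y)$. The case of a general interval $I$ is handled by setting $\conti Z\equiv 0$ on $I\cap(-\infty,t_0]$ and gluing with the forward solution constructed above, since only the forward dynamics enters \cref{eq:SDE}.
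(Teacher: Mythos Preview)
The paper does not give its own proof of this theorem: it is quoted verbatim from the references \cite{MR3098074,MR3882190} and used as a black box. So there is no ``paper's proof'' to compare against.

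That said, your sketch is essentially the argument carried out in those references. The orthogonalisation of $(\conti X,\conti Y)$ into independent Brownian motions $\conti B_1,\conti B_2$ and the rewriting as the perturbed Tanaka equation $d\conti Z=\alpha\,\chi(\conti Z)\,d\conti B_1+\beta\,d\conti B_2$ with $\alpha^2+\beta^2=1$ is exactly the reduction Prokaj uses, and your observation that any solution is itself a standard Brownian motion (via L\'evy's criterion) is the ingredient that makes both the weak-limit identification and the uniqueness estimate work.

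One comment on the uniqueness part: you correctly identify that the whole difficulty is concentrated in proving $L^0(\conti D)\equiv 0$, but your final sentence (``an occupation-time/Krylov-type estimate in the spirit of \cite{MR3098074,MR3882190}'') is really a pointer back to the references rather than an argument. This is precisely the step where the unperturbed Tanaka equation fails and where the independent noise $\beta\,d\conti B_2$ with $\beta>0$ must be exploited; Prokaj's proof handles it by a careful analysis of the quadratic variation of $\conti D$ on the set $\{\conti Z\conti Z^\star<0\}$, and this is not a routine Krylov estimate. If you intend your write-up to be self-contained, that step needs to be spelled out; if you are content to cite, then your sketch is an accurate summary of the literature proof.
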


\subsubsection{The unconditioned scaling limit result}\label{sect:uncond_case}
Let us now work on the completed canonical filtered probability space of a Brownian motion $\conti W = (\conti X, \conti Y)$ with covariance $\begin{psmallmatrix}1 &-1/2 \\ -1/2 &1
\end{psmallmatrix}$. For $u\in \R$, let $\conti Z^{(u)}$ be the strong solution of \cref{eq:SDE} with $I=[u,\infty)$ and $t_0=u$, provided by \cref{thm:ext_and_uni}. Note that $\conti Z^{(u)}$ satisfies \cref{eq:SDE} (only) for almost all $\omega$. For every $u$, $\conti Z^{(u)}$ is adapted, and it is simple to see that the map $(\omega,u)\mapsto \conti Z^{(u)}$ is jointly measurable. By Tonelli's theorem, for almost every $\omega$, $\conti Z^{(u)}$ is a solution for almost every $u$.
\begin{remark}\label{rem:solution_of_SDE_are_BM}
	For fixed $u$, $\conti Z^{(u)}$ is a Brownian motion on $[u,\infty)$. Note however that the coupling of $\conti Z^{(u)}$ for different values of $u$ is highly non trivial.
\end{remark}

\begin{remark}
	Given $\omega$ (even restricted to a set of probability one), we cannot say that $(\conti Z^{(u)})_{u\in \R}$ forms a whole field of solutions to \cref{eq:flow_SDE}, since we cannot guarantee that the SDE holds for all $u$ simultaneously. Similarly, it is expected that there exists exceptional $u$ where uniqueness fails.
\end{remark}

Now, let $\bar{\bm W} = (\bar{\bm X},\bar{\bm Y}) =(\bar{\bm X}_k,\bar{\bm Y}_k)_{k\in \Z}$ be the random plane walk defined below \cref{eq:walk_distrib}, and $\bar{\bm Z} = \wcp(\bar{\bm W})$ be the corresponding coalescent-walk process. 
We define rescaled versions: for all $n\geq 1, u\in \R$, let $\bar {\conti W}_n:\R\to \R^2$ and $\bar {\conti Z}^{(u)}_n:\R\to\R$ be the continuous functions defined by linearly interpolating the following points:
\begin{equation}\label{eq:rescaled_version}
\bar{\conti W}_n\left(\frac kn\right) = \frac 1 {\sqrt {2n}} \bar{\bm W}_{k},\quad k\in \Z, \qquad
\bar{\conti Z}^{(u)}_{n}\left(\frac kn\right) = \frac 1 {\sqrt {2n}} \bar{\bm Z}^{(\lfloor nu\rfloor)}_{k},\quad u\in \R, k\in \Z.
\end{equation}
Our most important technical result is the following theorem (whose proof is postponed to \cref{sect:coal_con_uncond}).
\begin{theorem}\label{thm:coal_con_uncond}
	Let $u_1< \ldots < u_k$. 
	We have the following joint convergence in $(\mathcal C(\R,\R))^{k+2}$: 
	\begin{equation} 
	\left(\bar{\conti W}_n,\bar{\conti Z}^{( u_1)}_n,\ldots,\bar{\conti Z}^{(u_k)}_n\right) 
	\xrightarrow[n\to\infty]{d}
	\left(\conti W,\conti Z^{(u_1)},\ldots,\conti Z^{(u_k)}\right).
	\label{eq:coal_con_uncond}
	\end{equation}
\end{theorem}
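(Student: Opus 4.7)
The plan is to combine an invariance principle for the driving walk with a stability result for the SDE \eqref{eq:SDE} under discretization, in the spirit of the proof of Theorem~4.1 in \cite{GHS}. A direct moment computation on $\nu$ shows that its steps have mean zero and covariance matrix $\begin{psmallmatrix}2 & -1 \\ -1 & 2\end{psmallmatrix}$, so that Donsker's invariance principle in $\mathcal{C}(\R,\R^2)$ gives $\bar{\conti W}_n \xrightarrow{d} \conti W$. By the Skorohod representation theorem I may pass to a probability space on which this convergence is almost sure, uniformly on compact subsets of $\R$, and work there for the rest of the argument.

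Next, I would establish tightness of each family $\{\bar{\conti Z}^{(u_i)}_n\}_n$ in $\mathcal{C}(\R,\R)$. Inspection of \eqref{eq:increments} shows that every increment of $\bar{\bm Z}^{(t)}$ is bounded in absolute value by $|\Delta \bar{\bm X}|+|\Delta \bar{\bm Y}|$; since $\nu$ has finite moments of all orders, a Kolmogorov--Chentsov argument applied to the piecewise-linear interpolations $\bar{\conti Z}^{(u_i)}_n$ yields a uniform modulus of continuity comparable to that of $\bar{\conti W}_n$, and hence tightness in $\mathcal{C}(\R,\R)$.

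The core step is the identification of any subsequential limit as a strong solution of \eqref{eq:SDE} driven by $\conti W$. Telescoping \eqref{eq:increments} one may write, for $k\geq \lfloor nu\rfloor$,
\[\bar{\bm Z}^{(\lfloor nu\rfloor)}_k = \sum_{j=\lfloor nu\rfloor+1}^{k} \Big( \idf_{\{\bar{\bm Z}^{(\lfloor nu\rfloor)}_{j-1}\geq 0\}} \Delta \bar{\bm Y}_j - \idf_{\{\bar{\bm Z}^{(\lfloor nu\rfloor)}_{j-1}< 0\}} \Delta \bar{\bm X}_j\Big) + R_n(u,k),\]
where $R_n(u,\cdot)$ collects the correction terms coming from the third case of \eqref{eq:increments} (the ``coalescence'' steps where $\bar{\bm Z}^{(u)}$ jumps from a negative value to a non-negative one). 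Each individual correction is of order $O(1)$, and the number of such steps on a window of size $n$ is controlled, on a high-probability event, by the number of zero-crossings of a random walk with step distribution of finite moments. A second-moment estimate then gives $(2n)^{-1/2} \sup_{k\leq Tn} |R_n(u,k)| \to 0$ in probability for every $T>0$, after which the a.s.\ convergence $\bar{\conti W}_n \to \conti W$ combined with the convergence of the indicator-weighted integrals lets one pass to the limit and verify that every subsequential limit satisfies \eqref{eq:SDE}. Pathwise uniqueness from \cref{thm:ext_and_uni} then forces the limit to be $\conti Z^{(u)}$.

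The main obstacle is the convergence of the indicator-weighted integrals, since the coefficients of \eqref{eq:SDE} are discontinuous at $0$: one must show that $\idf_{\{\bar{\conti Z}^{(u)}_n \geq 0\}}$ converges to $\idf_{\{\conti Z^{(u)} \geq 0\}}$ in $L^2(ds \otimes d\Prob)$ on compact time intervals. I would handle this via an occupation-time estimate, using that any continuous-martingale subsequential limit of $\bar{\conti Z}^{(u)}_n$ has quadratic variation $t-u$ and hence, being a time-changed Brownian motion, spends zero Lebesgue time at $0$ almost surely. Once the one-dimensional convergence jointly with $\bar{\conti W}_n$ is established, the joint statement \eqref{eq:coal_con_uncond} is immediate: each $\conti Z^{(u_i)}$ is a measurable functional of the common driving Brownian motion $\conti W$, so the joint limit of the tuple is automatically the one appearing on the right-hand side of \eqref{eq:coal_con_uncond}.
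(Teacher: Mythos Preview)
Your overall strategy---tightness, identification of subsequential limits as solutions of \eqref{eq:SDE}, then pathwise uniqueness---is the same as the paper's. But the paper's execution hinges on an ingredient you do not use and which your argument cannot easily do without: Proposition~\ref{prop:trajectories_are_rw}, the ``miracle of the geometric distribution'' stating that each discrete trajectory $\bar{\bm Z}^{(u)}$ is itself a random walk with the step law of $\bar{\bm Y}$. This gives, for free, both tightness of $\bar{\conti Z}^{(u)}_n$ and the fact that every subsequential limit $\tilde{\conti Z}$ is a standard Brownian motion. The paper then works locally on $\{|\tilde{\conti Z}|>\eps\}$, where the discrete dynamics is exactly $\Delta \bar{\bm Z}=\Delta \bar{\bm Y}$ (or $-\Delta \bar{\bm X}$), so $\tilde{\conti Z}-\conti Y$ (resp.\ $\tilde{\conti Z}+\conti X$) is locally constant; taking $\eps\to 0$ by stochastic dominated convergence and using $\int \idf_{\{\tilde{\conti Z}=0\}}d\tilde{\conti Z}=0$ (legitimate because $\tilde{\conti Z}$ is already known to be Brownian) yields the SDE.

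Your route has two genuine gaps. First, the remainder $R_n$ is not obviously negligible: each coalescence correction is $O(1)$, but the number of sign changes of a centred walk on $[0,n]$ is of order $\sqrt n$, so $(2n)^{-1/2}R_n$ is a priori only $O(1)$, not $o(1)$. The paper sidesteps this entirely by never summing corrections globally---it only looks at intervals where the process is bounded away from zero. Second, your occupation-time step is circular: you assert that the subsequential limit is a continuous martingale with quadratic variation $t-u$, but nothing in your argument establishes this (the increments of $\bar{\bm Z}^{(u)}$ are not martingale differences in any obvious way once the third branch of \eqref{eq:increments} intervenes, unless one invokes Proposition~\ref{prop:trajectories_are_rw}). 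Without that input you cannot conclude zero occupation time at $0$, and hence cannot pass to the limit in the indicator integrals. A smaller omission: to apply pathwise uniqueness you must check that $\conti W$ is a Brownian motion in the filtration generated by $(\conti W,\tilde{\conti Z})$ jointly, which the paper does explicitly and you do not address.
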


\subsubsection{The conditioned scaling limit result}\label{sect:condit_case}

As a standard application of \cite[Theorem 4]{duraj2015invariance}, the scaling limit of the random walk $\bar {\conti W}_n$ conditioned on starting at the origin at time 0 and ending at the origin at time $n+1$ is $\conti W_e=(\conti X_e, \conti Y_e)$, the Brownian excursion in the non-negative quadrant of covariance $\begin{psmallmatrix}1 &-1/2 \\ -1/2 &1
\end{psmallmatrix}$. Let us denote by $(\Omega, \mathcal F, (\mathcal F_t)_{0\leq t \leq 1}, \Prob_\exc)$ the completed canonical probability space of $\conti W_e$, and work from now on in that space.

It makes sense that the scaling limit of the coalescent-walk process in this conditioned setting should be the solution of \cref{eq:flow_SDE} driven by $\conti W_e$, for which we have to show existence and uniqueness. First, let us remark that since Brownian excursions are semimartingales, stochastic integrals are still well-defined. We skip the rather abstract proof of the following, which relies on absolute continuity between Brownian excursion and Brownian motion:

\begin{theorem}\label{thm:ext_and_uni_excursion}
	Denote $\mathcal F^{(u)}_t = \sigma (\conti W_e(s)-\conti W_e(u), u\leq s \leq t)$ completed by the $\Prob_\exc$-negligible sets of $\Omega$. 
	There is a jointly measurable map $(\omega, u)\mapsto \conti Z_e^{(u)}$ such that for all $u$, 
	$\conti Z_e^{(u)}$ is $(\mathcal F^{(u)}_t)_t$-adapted, and
	almost surely, for almost every $u$, $\conti Z_e^{(u)}$ solves \cref{eq:flow_SDE} driven by $\conti W_e$. Moreover, for $u\in(0,1)$, if $\conti Z^\star$ is another $(\mathcal F^{(u)}_t)_t$-adapted solution of \cref{eq:flow_SDE} driven by $\conti W_e$ started at time $u$, then $\conti Z^\star = \conti Z_e^{(u)}$ almost surely.
\end{theorem}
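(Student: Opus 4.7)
The plan is to transfer existence and uniqueness from the Brownian motion case (\cref{thm:ext_and_uni}) to the Brownian excursion case via absolute continuity on strict subintervals of $(0,1)$, and then to piece the pointwise-in-$u$ constructions into a jointly measurable solution field, concluding the ``a.s.\ for a.e.\ $u$'' statement by a Tonelli argument.

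First, I would fix $u \in (0,1)$ and $\delta \in (0, 1-u)$, and show that conditional on $\conti W_e(u)$, the law of $(\conti W_e(t))_{t \in [u, 1-\delta]}$ is absolutely continuous with respect to the law of a Brownian motion of covariance $\begin{psmallmatrix}1 & -1/2 \\ -1/2 & 1\end{psmallmatrix}$ started at $\conti W_e(u)$. This follows from viewing $\conti W_e$ as a Doob $h$-transform of a two-dimensional Brownian motion conditioned to stay in the quarter plane and return to the origin at time~$1$, with $h$-function equal to the transition density $p(1-t,\cdot,0)$ of the Brownian motion killed upon exiting the quadrant; equivalently, by Girsanov, on $[u, 1-\delta]$ one represents $\conti W_e$ as such a Brownian motion plus a drift given by $\nabla \log h$, and the associated Radon--Nikodym density is bounded above and bounded away from zero on events where $\conti W_e(u)$ lies in a fixed compact subset of the open quadrant.

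Given this absolute continuity, \cref{thm:ext_and_uni} applied on $[u, 1-\delta]$ yields strong existence and pathwise uniqueness of an adapted solution of \eqref{eq:flow_SDE} driven by the unconditioned Brownian motion; both properties pass to an equivalent measure, so the SDE driven by $\conti W_e$ admits a unique $(\mathcal F^{(u)}_t)$-adapted continuous solution on $[u, 1-\delta]$, say $\conti Z_e^{(u,\delta)}$. Uniqueness forces consistency as $\delta \downarrow 0$, and since $\conti Z_e^{(u,\delta)}$ is a stochastic integral of bounded predictable integrands against the continuous semimartingale $\conti W_e$, letting $\delta \downarrow 0$ extends it continuously to $[u,1]$, producing $\conti Z_e^{(u)}$. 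Any other $(\mathcal F^{(u)}_t)$-adapted solution agrees with $\conti Z_e^{(u)}$ on each $[u, 1-\delta]$ by the above uniqueness, hence on $[u, 1]$ by continuity. To produce a jointly measurable map $(\omega, u) \mapsto \conti Z_e^{(u)}$, I would perform the construction along rational $u$ and dyadic $\delta$ (countable data, measurable in $\omega$), then extend to general $u$ by a measurable selection; for $u \in \{0,1\}$ I would set $\conti Z_e^{(u)}$ to an arbitrary measurable convention, which only affects a Lebesgue-null set of $u$. Since for each fixed $u \in (0,1)$ the process $\conti Z_e^{(u)}$ solves \eqref{eq:flow_SDE} $\Prob_\exc$-a.s., Tonelli applied to the jointly measurable indicator of this event yields the ``a.s.\ for a.e.\ $u$'' conclusion.

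The main obstacle I foresee is to justify the absolute continuity with sufficient uniform control to transfer \emph{pathwise} (rather than only weak) uniqueness and to rule out accumulation of exceptional null sets as $\delta \downarrow 0$. This is delicate because the Radon--Nikodym density degenerates as $t \to 1$ (resp.\ $u \to 0$), when $\conti W_e$ touches the boundary of the quadrant; this is precisely why the uniqueness clause is restricted to $u \in (0,1)$ and why the behaviour near $t = 1$ must be handled by continuity of the stochastic integral rather than by the $h$-transform argument. A secondary subtlety is to check that the filtration $(\mathcal F^{(u)}_t)_t$ generated by the \emph{increments} of $\conti W_e$ after time $u$ suffices for adaptedness of the solution obtained through the equivalent-measure argument, which should follow from the fact that under the reference BM measure this is the canonical filtration of a centered Brownian motion.
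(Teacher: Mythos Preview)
Your proposal is correct and follows precisely the route the paper indicates: the paper omits the proof entirely, saying only that it ``relies on absolute continuity between Brownian excursion and Brownian motion,'' and your outline is a faithful and accurate expansion of that hint (localize to $[u,1-\delta]$, transfer strong existence and pathwise uniqueness from \cref{thm:ext_and_uni} via equivalence of measures, extend to $t=1$ using that $\conti W_e$ is a semimartingale on $[0,1]$ with bounded integrands, and conclude the ``a.s.\ for a.e.\ $u$'' statement by Tonelli). The subtleties you flag --- degeneracy of the density near the endpoints and adaptedness to the increment filtration $\mathcal F^{(u)}_t$ --- are exactly the right ones, and your suggested resolutions are sound; for joint measurability it is slightly cleaner to use directly that the strong solution is a fixed measurable functional of the increment path $(\conti W_e(s)-\conti W_e(u))_{s\ge u}$ rather than going through rationals, but your approach also works.
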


From the above result and the discrete absolutely continuity arguments of \cite{MR3342657,duraj2015invariance}, we can deduce the following analogous result of \cref{thm:coal_con_uncond} (whose proof is omitted). We use the same notation as in \cref{eq:rescaled_version}, and state the result for uniform random times for later convenience.

\begin{theorem}
	\label{thm:discret_coal_conv_to_continuous}
	Let $\bm u_1< \ldots < \bm u_k$ be $k$ sorted independent continuous uniform random variables on $[0,1]$, independent from all other random variables. We have the following convergence in $(\mathcal C([0,1],\R))^{k+2}$: 
	\begin{equation*}
	\left(\bar{\conti W}_n,\bar{\conti Z}^{(\bm u_1)}_n,\ldots,\bar{\conti Z}^{(\bm u_k)}_n\Big|(\bar{\bm W}_t)_{0\leq t\leq {n+1}}  \in \mathcal \Walks^{A,\exc}_{n+2}\right)
	\xrightarrow[n\to\infty]{d}
	\left(\conti W_e,\conti Z_e^{(\bm u_1)},\ldots,\conti Z_e^{(\bm u_k)}\right).
	\end{equation*}
\end{theorem}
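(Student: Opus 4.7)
The plan is to upgrade the unconditioned convergence of \cref{thm:coal_con_uncond} to the conditioned setting by a classical absolute continuity argument for random walks in cones, following \cite{MR3342657,duraj2015invariance}. The walk marginal $\bar{\conti W}_n \to \conti W_e$ is a direct application of \cite[Theorem 4]{duraj2015invariance}: one checks that $\nu$ is centered (a short computation yields $\tfrac12(1,-1)+\sum_{i,j\geq 0}2^{-i-j-3}(-i,j)=(0,0)$), has exponential tails, and has covariance matrix $\begin{psmallmatrix}1 & -1/2\\ -1/2 & 1\end{psmallmatrix}$ after the rescaling by $\sqrt{2n}$; the non-negative quadrant is a Weyl chamber whose associated harmonic function is classical, so the hypotheses of Duraj--Wachtel's theorem are met.

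For the joint convergence of the coalescent trajectories, I would fix $\eps>0$ and work on the high-probability event $E_\eps = \{\eps\leq \bm u_1,\; \bm u_k\leq 1-\eps\}$. The key observation from \cref{eq:increments} is that $\bar{\bm Z}^{(\lfloor n\bm u_i\rfloor)}$ restricted to $[\lfloor n\bm u_i\rfloor,\,\lfloor n(1-\eps)\rfloor]$ is a deterministic functional of the increments of $\bar{\bm W}$ on the same window. By the Markov property, on $[\lfloor n\eps\rfloor,\,\lfloor n(1-\eps)\rfloor]$ the law of $\bar{\bm W}$ conditioned on $\{(\bar{\bm W}_t)_{0\leq t\leq n+1}\in \mathcal W^{A,\exc}_{n+2}\}$ is absolutely continuous with respect to its unconditioned law, with a Radon--Nikodym derivative given by a product of cone-survival and return probabilities at the two endpoints of the window. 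The local limit theorem estimates of \cite{MR3342657} imply that, under the diffusive rescaling, these densities converge uniformly on compacts to the analogous Radon--Nikodym derivative between $\conti W_e$ and $\conti W$ on $[\eps,1-\eps]$. Combined with \cref{thm:coal_con_uncond} (using the independence of $(\bm u_i)$ from the walk to first condition on them), this yields joint convergence of $(\bar{\conti W}_n,\bar{\conti Z}^{(\bm u_1)}_n,\ldots,\bar{\conti Z}^{(\bm u_k)}_n)$ restricted to $[\eps,1-\eps]$ to a limit which, by the strong-uniqueness statement of \cref{thm:ext_and_uni_excursion}, is $(\conti W_e,\conti Z_e^{(\bm u_1)},\ldots,\conti Z_e^{(\bm u_k)})$ on the same window.

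To extend the convergence to the full interval $[0,1]$, one lets $\eps\to 0$. Before time $\bm u_i$ both trajectories are identically zero, while tightness on $[\bm u_i,1]$ follows from a uniform Kolmogorov criterion applied to the increments of the conditioned walk, whose second moments stay controlled on growing windows since the conditioning only mildly perturbs the free walk. The main obstacle I foresee lies in the limit identification: the discrete map $\bar{\bm W}\mapsto \bar{\bm Z}^{(u)}$ defined by \cref{eq:increments} is a Tanaka-type map, which is \emph{not} continuous in the driver under the uniform topology, so one cannot simply pass to the limit in \cref{eq:increments}. Instead, one must extract a subsequential limit, verify that it is adapted to the shifted filtration $(\mathcal F^{(\bm u_i)}_t)_t$ of $\conti W_e$, and identify it with $\conti Z_e^{(\bm u_i)}$ via the strong uniqueness of \cref{thm:ext_and_uni_excursion}. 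A secondary technical concern is the potential degeneracy when two of the $\bm u_i$ are very close, where the Radon--Nikodym derivatives could misbehave, but this is handled by conditioning on the configuration of the a.s.\ distinct $\bm u_i$.
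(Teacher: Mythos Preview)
Your proposal is correct and follows precisely the route the paper indicates. The paper does not give a detailed proof of this theorem; it only states that the result follows from \cref{thm:coal_con_uncond} together with ``the discrete absolutely continuity arguments of \cite{MR3342657,duraj2015invariance}'', which is exactly the scheme you have written out: transfer the unconditioned joint convergence to a window $[\eps,1-\eps]$ via the convergent Radon--Nikodym densities of the walk conditioned to stay in the cone, identify the limit using the strong-uniqueness part of \cref{thm:ext_and_uni_excursion}, and send $\eps\to 0$. One small terminological slip: the non-negative quadrant is not a Weyl chamber, but this is irrelevant since \cite{MR3342657,duraj2015invariance} treat general cones.
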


\subsection{The construction of the limiting object}\label{sect: constr_limiting_object}

We introduce the limiting \emph{coalescent Baxter permuton}. We place ourselves in the probability space defined above, where $\conti W_e=(\conti X_e, \conti Y_e)$ is a Brownian excursion of correlation $-1/2$ conditioned to stay in the non-negative quadrant. Let
$\conti{Z}_e=\{\conti Z_e^{(u)}\}_{u\in [0,1]}$ be the family of processes given by \cref{thm:ext_and_uni_excursion}, which almost surely solves \cref{eq:flow_SDE} driven by $\conti W_e$ for almost every $u$. From the continuous coalescent-walk process $\conti Z_e$ we build a binary relation $\leq_{\conti Z_e}$ on $[0,1]$ defined as in \cref{eq:coal_to_perm}. Clearly, $(\omega, x,y)\mapsto \idf_{x\leq_{\conti Z_e} y}$ is measurable, and we have the following property whose proof, which relies on path-wise uniqueness, is skipped.
\begin{proposition} \label{prop:total_order}The relation $\leq_{\conti Z_e}$ is a total order on $[0,1]\setminus \bm A$, where $\bm A$ is a random set of zero Lebesgue measure.\end{proposition}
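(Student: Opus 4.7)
The plan is to verify the axioms of a total order on $[0,1]\setminus \bm A$ after choosing $\bm A$ as an appropriate random null set. Reflexivity ($u \leq_{\conti Z_e} u$) is immediate from the third clause of the definition. Totality and antisymmetry for distinct points are automatic: given $u < v$, exactly one of $\conti Z_e^{(u)}(v) < 0$ or $\conti Z_e^{(u)}(v) \geq 0$ holds, which yields exactly one of $u \leq_{\conti Z_e} v$ or $v \leq_{\conti Z_e} u$. So the substantive task is to prove transitivity.

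The key lemma that drives transitivity is the continuous analog of the coalescent property of \cref{def:discrete_coal_process}, i.e.\ the following \emph{non-crossing} statement: almost surely, for all $u_1 < u_2$ in $[0,1]\setminus\bm A$, if $\conti Z_e^{(u_1)}(t_0) \leq \conti Z_e^{(u_2)}(t_0)$ at some $t_0 \geq u_2$, then the inequality persists for every $t \in [t_0,1]$. Equivalently, two trajectories in the family cannot strictly cross: once they meet at some time $\tau$, they coincide on $[\tau,1]$. I expect this to be the main technical obstacle of the proof.

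To establish non-crossing I would argue via path-wise uniqueness. At a putative crossing time $\tau$, both $\conti Z_e^{(u_1)}$ and $\conti Z_e^{(u_2)}$ are continuous adapted solutions on $[\tau,1]$ of \cref{eq:flow_SDE} driven by $\conti W_e$, sharing the initial value $z^\star := \conti Z_e^{(u_1)}(\tau)$ at $\tau$. Path-wise uniqueness for the SDE started from an arbitrary initial point can be obtained by the same absolute-continuity arguments used to prove \cref{thm:ext_and_uni_excursion}, adapting the results of \cite{MR3882190,MR3098074} (originally stated for the zero initial condition). A cleaner alternative is to transfer the coalescent property directly from the discrete side, where it holds by \cref{def:discrete_coal_process}, to the continuum via the joint convergence of \cref{thm:discret_coal_conv_to_continuous} applied simultaneously to two indices $u_1<u_2$: non-crossing is a closed condition on pairs of paths and therefore passes to the weak limit. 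The set $\bm A$ is then defined as the union of the exceptional null set from \cref{thm:ext_and_uni_excursion} with the null set of $u$'s at which non-crossing could fail against some partner; that this enlarged set remains Lebesgue-null almost surely follows from a Fubini argument, since non-crossing can first be established for each fixed pair and then extended to all pairs outside a null set.

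With non-crossing at hand, transitivity reduces to a case analysis on the six possible natural orderings of three points $a, b, c \in [0,1]\setminus\bm A$ under the hypotheses $a \leq_{\conti Z_e} b$ and $b \leq_{\conti Z_e} c$. In every case the two hypotheses translate into sign conditions on $\conti Z_e^{(\cdot)}(\cdot)$, and non-crossing (directly or in its contrapositive form) produces the sign condition equivalent to $a \leq_{\conti Z_e} c$. For instance, if $a<b<c$, then $\conti Z_e^{(a)}(b) < 0 = \conti Z_e^{(b)}(b)$ and non-crossing gives $\conti Z_e^{(a)}(c) \leq \conti Z_e^{(b)}(c) < 0$, whence $a \leq_{\conti Z_e} c$; the remaining five cases are entirely analogous. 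The hard step throughout is the non-crossing property, which demands extending path-wise uniqueness of \cref{thm:ext_and_uni_excursion} beyond the zero initial condition or, equivalently, executing the discrete-to-continuous transfer described above.
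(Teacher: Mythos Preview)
Your proposal is correct and matches the approach the paper indicates: the paper omits the proof but states explicitly that it ``relies on path-wise uniqueness,'' which is precisely the mechanism you use to obtain the non-crossing/coalescence property and then run the case analysis for transitivity. You have also correctly isolated the one genuine technical point---that uniqueness must be invoked at a random meeting time with a possibly nonzero value, which goes slightly beyond the statement of \cref{thm:ext_and_uni_excursion}---and your two suggested remedies (reduce to the zero initial condition after the first return to $0$, or transfer non-crossing from the discrete coalescent-walk process via \cref{thm:discret_coal_conv_to_continuous}) are both viable.
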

We then define the following random function (note that $(\omega, t)\mapsto \varphi_{\conti Z_e}(t)$ is measurable):
\begin{equation*}
\varphi_{\conti Z_e}(t)\coloneqq\Leb\left( \big\{x\in[0,1]|x \leq_{\conti Z_e} t\big\}\right)=\Leb\left( \big\{x\in[0,t)|\conti Z^{(x)}_{e}(t)<0\big\} \cup \big\{x\in[t,1]|\conti Z^{(t)}_{e}(x)\geq0\big\} \right),
\end{equation*}
where here $\Leb(\cdot)$ denotes the one-dimensional Lebesgue measure.
We define the \emph{coalescent Baxter permuton} as the push-forward of the Lebesgue measure via the map $(\Id,\varphi_{\conti Z_e})$, i.e.
\[\bm \mu_B (\cdot)= \mu_{\conti Z_e}(\cdot)\coloneqq(\Id,\varphi_{\conti Z_e})_{*}\Leb (\cdot)=\Leb\left(\{t\in[0,1]|(t,\varphi_{\conti Z_e}(t))\in \ \cdot\ \}\right). \]
\begin{observation}
We try to give an intuition behind the definition of $\bm \mu_B$. Recall that given a coalescent-walk process $Z=\{Z^{(t)}\}_{t\in[n]}\in\mathcal{C}$, we can associate to it the corresponding Baxter permutation $\sigma=\cpbp(Z)$ and the total order $\leq_{Z}$ on $[n]$. The permutation $\sigma$ satisfies the following property: for every $i\in [n]$, $\sigma(i)=|\{j\in[n]|j\leq_Z i\}|.$
The function $\varphi_{\conti Z_e}$ is a continuous analogue of the permutation $\sigma$, when we consider the continuous coalescent-walk process $\conti Z_e$ instead of a discrete one, and $\mu_{\conti Z_e}$ is the associated permuton.
\end{observation}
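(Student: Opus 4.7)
The stated observation is primarily motivational; the only concrete mathematical assertion it makes is the identity
\[ \sigma(i) \;=\; \bigl|\{j \in [n] \mid j \leq_Z i\}\bigr| \quad \text{for every } i \in [n], \]
where $Z = \{Z^{(t)}\}_{t\in[n]} \in \mathcal{C}_n$ and $\sigma = \cpbp(Z)$. My plan is therefore to give a short, direct verification of this identity by unpacking the definition of $\cpbp$, and then to explain why the rest of the observation does not admit a formal proof as stated.

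By the definition of $\cpbp$, $\sigma$ is the unique element of $\Perms_n$ satisfying $\sigma(i) \leq \sigma(j) \iff i \leq_Z j$ for all $i,j \in [n]$. Since $\leq_Z$ was asserted (just after \cref{eq:coal_to_perm}) to be a total order on $[n]$, this says exactly that $\sigma$ is the unique order-isomorphism from $([n], \leq_Z)$ to $([n], \leq)$. Every order-isomorphism between finite totally ordered sets of the same cardinality sends each element to its rank, so
\[ \sigma(i) \;=\; \bigl|\{k \in [n] \mid k \leq \sigma(i)\}\bigr| \;=\; \bigl|\{j \in [n] \mid j \leq_Z i\}\bigr|, \]
where the second equality uses that $\sigma$ restricts to a bijection between $\{j \in [n] \mid j \leq_Z i\}$ and $\{k \in [n] \mid k \leq \sigma(i)\}$. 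No step here is substantive; the only potential subtlety is verifying that $\leq_Z$ really is a total order on $[n]$, which is mentioned but not proved in the excerpt, and which can be checked directly from \cref{eq:coal_to_perm} using the coalescing property in \cref{def:discrete_coal_process} (reflexivity and totality are immediate; antisymmetry and transitivity follow by a short case analysis on whether the relevant indices are ordered as $i<j$, $i>j$, or $i=j$, together with the rule that coalesced trajectories agree thereafter).

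The remainder of the observation, namely that $\varphi_{\conti Z_e}$ is a \emph{continuous analogue} of $\sigma$ and that $\mu_{\conti Z_e}$ is the associated permuton, is an informal interpretation rather than a theorem. Its rigorous content is already built into the definitions: $\varphi_{\conti Z_e}(t)$ is obtained from the discrete formula by replacing the counting measure on $[n]$ with the Lebesgue measure on $[0,1]$ and $\leq_Z$ with $\leq_{\conti Z_e}$, while $\mu_{\conti Z_e} = (\Id, \varphi_{\conti Z_e})_{*}\Leb$ is the natural continuous counterpart of the discrete permuton $\mu_\sigma$ from \cref{eq:perdef}. Accordingly, I would accompany the short proof above with a remark that no further formal statement is being claimed here, and defer any substantive analysis of $\varphi_{\conti Z_e}$ and $\mu_{\conti Z_e}$ (for instance, that $\mu_{\conti Z_e}$ is almost surely a permuton) to the subsequent sections.
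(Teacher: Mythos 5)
Your proof is correct, and since the paper states this as an observation without proof, your direct verification via the rank characterization of order-isomorphisms is exactly the argument the authors leave implicit. You also rightly note that the remainder of the observation is interpretive rather than a formal claim, and that the substantive properties of $\varphi_{\conti Z_e}$ and $\mu_{\conti Z_e}$ (such as \cref{prop:order_preserved}) are handled separately.
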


The following result is proved as \cite[Proposition 3.1]{maazoun}, relying on \cref{prop:total_order}.
\begin{proposition}\label{prop:order_preserved}
	Almost surely, $\mu_{\conti Z_e}$ is a permuton. 
\end{proposition}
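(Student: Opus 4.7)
Since $\mu_{\conti Z_e}$ is the pushforward of $\Leb_{[0,1]}$ by the measurable map $t\mapsto (t,\varphi_{\conti Z_e}(t))$, it is automatically a Borel probability measure on $[0,1]^2$ whose first marginal is Lebesgue. The substance of the proposition therefore reduces to showing that the second marginal $(\varphi_{\conti Z_e})_*\Leb$ equals $\Leb$, i.e., that the random function $\varphi_{\conti Z_e}$ transports Lebesgue measure on $[0,1]$ to itself.

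My plan is a ``rank'' argument, paralleling the proof of \cite[Proposition 3.1]{maazoun}. On a suitable extension of the probability space, sample iid random variables $\bm T_1,\bm T_2,\ldots$ uniformly on $[0,1]$, independent of $\conti W_e$. Since the total order $\leq_{\conti Z_e}$ from \cref{prop:total_order} is a functional of $\conti W_e$ and hence independent of the $\bm T_i$, and since $(\bm T_1,\ldots,\bm T_n)$ is exchangeable, the $\leq_{\conti Z_e}$-rank $\bm r_n$ of $\bm T_1$ within $\{\bm T_1,\ldots,\bm T_n\}$ is uniformly distributed on $\{1,\ldots,n\}$. (Almost surely the $\bm T_i$'s are pairwise distinct and avoid the Lebesgue-null set $\bm A$, so ranks are unambiguously defined.) In particular $\bm r_n/n$ converges in distribution to $\mathrm{Unif}([0,1])$.

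On the other hand, conditionally on $(\bm T_1,\conti W_e)$ the indicators $\{\idf_{\bm T_i\leq_{\conti Z_e}\bm T_1}\}_{i\geq 2}$ are iid Bernoulli variables of mean $\Leb\{x:x\leq_{\conti Z_e}\bm T_1\}=\varphi_{\conti Z_e}(\bm T_1)$; the strong law of large numbers yields
\begin{equation*}
\frac{\bm r_n}{n}=\frac{1}{n}\Big(1+\sum_{i=2}^{n}\idf_{\bm T_i\leq_{\conti Z_e}\bm T_1}\Big)\xrightarrow[n\to\infty]{\mathrm{a.s.}}\varphi_{\conti Z_e}(\bm T_1).
\end{equation*}
Identifying the two distributional limits forces $\varphi_{\conti Z_e}(\bm T_1)\sim\mathrm{Unif}([0,1])$, which is precisely the desired identity $(\varphi_{\conti Z_e})_*\Leb=\Leb$.

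I do not anticipate any serious obstacle. The only measurability statements required are those already flagged in the excerpt (joint measurability of $(\omega,t)\mapsto\varphi_{\conti Z_e}(t)$ and of $(\omega,x,y)\mapsto\idf_{x\leq_{\conti Z_e}y}$), and \cref{prop:total_order} supplies the only structural ingredient — that $\leq_{\conti Z_e}$ is genuinely a total order on a full-Lebesgue-measure subset of $[0,1]$ — so that ranks are well defined on an almost sure event and exchangeability applies.
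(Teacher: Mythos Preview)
Your approach is exactly the one the paper invokes (it simply cites \cite[Proposition~3.1]{maazoun} together with \cref{prop:total_order}), and the rank/exchangeability argument is the right idea. There is, however, a genuine slip in the last step.

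The claim that ``$\varphi_{\conti Z_e}(\bm T_1)\sim\mathrm{Unif}([0,1])$ is precisely the desired identity $(\varphi_{\conti Z_e})_*\Leb=\Leb$'' is not correct as written. Since $\bm T_1$ is independent of $\conti W_e$, the unconditional law of $\varphi_{\conti Z_e}(\bm T_1)$ is the \emph{annealed} measure $\E\big[(\varphi_{\conti Z_e})_*\Leb\big]$; knowing this equals Lebesgue only says the expected pushforward is Lebesgue, not that the random pushforward equals Lebesgue almost surely. For a quick sanity check, take $\varphi$ to be $t\mapsto t/2$ or $t\mapsto (1+t)/2$ each with probability $1/2$: then $\varphi(\bm T_1)$ is uniform on $[0,1]$, yet $(\varphi)_*\Leb$ is never Lebesgue.

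The repair is immediate: run the whole argument \emph{conditionally on $\conti W_e$}. Because the $\bm T_i$ are independent of $\conti W_e$, they remain i.i.d.\ uniform under this conditioning, so exchangeability still gives that $\bm r_n$ is uniform on $\{1,\dots,n\}$ conditionally on $\conti W_e$; the law-of-large-numbers step is unchanged. You then obtain that $\varphi_{\conti Z_e}(\bm T_1)\sim\mathrm{Unif}([0,1])$ conditionally on $\conti W_e$, which \emph{is} the statement $(\varphi_{\conti Z_e})_*\Leb=\Leb$ almost surely.
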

The final proof of \cref{thm:baxter_permuton_conv}, i.e.\ the convergence of uniform Baxter permutations to $\bm \mu_B$, can be found in \cref{sect:perm_conv}. We give here a short sketch. The proof is based on the analysis of pattern extraction from uniform Baxter permutations. \cref{cor:patterns} relates the probability of extracting a specific pattern to the probability that some trajectories of the corresponding coalescent-walk process have given signs at given times. Then, by \cref{thm:discret_coal_conv_to_continuous}, the latter converges to the analogue probability for the limiting continuous coalescent-walk process.

\appendix

\section{The proof of \cref{thm:coal_con_uncond}}\label{sect:coal_con_uncond}

Recall that $\bar{\bm W} = (\bar{\bm X},\bar{\bm Y}) =(\bar{\bm X}_k,\bar{\bm Y}_k)_{k\in \Z}$ is the random plane walk defined below \cref{eq:walk_distrib}, and $\bar{\bm Z} = \wcp(\bar{\bm W})$ is the corresponding coalescent-walk process. We need the following result whose proof is left to the complete version of this extended abstract.

\begin{proposition}\label{prop:trajectories_are_rw}
	For every $u\in \Z$, $\bar{\bm Z}^{(u)}$ has the distribution of a random walk with the same step distribution as $\bar{\bm Y}$ (which is the same as that of $-\bar{\bm X}$).
\end{proposition}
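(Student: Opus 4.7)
The plan is to establish directly that the sequence of increments $(\bar{\bm Z}^{(u)}_k - \bar{\bm Z}^{(u)}_{k-1})_{k \geq u+1}$ is i.i.d.\ with common law equal to that of $\bar{\bm Y}_1 - \bar{\bm Y}_0$; combined with $\bar{\bm Z}^{(u)}_u = 0$, this is exactly the assertion. Reading off the $Y$-marginal of $\nu$ from \cref{eq:walk_distrib}, $\bar{\bm Y}_1 - \bar{\bm Y}_0$ has law $\tfrac12\delta_{-1} + \sum_{j\geq 0} 2^{-j-2}\delta_j$, and an identical computation on the $X$-marginal gives the same law for $-(\bar{\bm X}_1 - \bar{\bm X}_0)$, which already takes care of the parenthetical remark.

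The main reduction is as follows. Inspecting \cref{eq:increments}, the increment $\Delta_k \coloneqq \bar{\bm Z}^{(u)}_k - \bar{\bm Z}^{(u)}_{k-1}$ is a deterministic function of the current value $z \coloneqq \bar{\bm Z}^{(u)}_{k-1}$ and of the fresh step $(\alpha,\beta) \coloneqq \bar{\bm W}_k - \bar{\bm W}_{k-1}$. Because the steps of $\bar{\bm W}$ are i.i.d.\ and $z$ is measurable with respect to $(\bar{\bm W}_u,\ldots,\bar{\bm W}_{k-1})$, the step $(\alpha,\beta)$ is independent of the past. So it suffices to show that for every $z \in \Z$, the conditional law of $\Delta_k$ given $\{\bar{\bm Z}^{(u)}_{k-1} = z\}$ equals the step law of $\bar{\bm Y}$; an induction on $k$ then upgrades this to i.i.d.\ increments.

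For $z \geq 0$ the first branch of \cref{eq:increments} directly gives $\Delta_k = \beta$, with the right distribution. For $z < 0$, when $(\alpha,\beta) = (+1,-1)$ (probability $1/2$) we have $z - \alpha < 0$, so the middle branch applies and $\Delta_k = -1$. When $(\alpha,\beta) = (-i,j)$ with $i,j \geq 0$, the middle branch applies if $i < -z$ (giving $\Delta_k = i$) and the last (``reset'') branch applies if $i \geq -z$ (giving $\Delta_k = j - z$). For any target value $v \geq 0$ these two contributions are mutually exclusive: if $v \leq -z - 1$, only the middle branch with $i = v$ contributes, yielding $\sum_{j\geq 0} 2^{-v-j-3} = 2^{-v-2}$; if $v \geq -z$, only the reset branch contributes, with $j = v+z$ and $i$ ranging over $\{-z,-z+1,\ldots\}$, and the geometric sum gives $\sum_{i \geq -z} 2^{-i-(v+z)-3} = 2^{-v-2}$ as well. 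Thus $\Prob(\Delta_k = v \mid z) = 2^{-v-2}$ for all $v \geq 0$, and $\Prob(\Delta_k = -1 \mid z) = 1/2$, independently of $z$, matching the step law of $\bar{\bm Y}$. The only real work is the bookkeeping of the case split in the regime $z < 0$; the pleasant coincidence that the middle and reset branches each produce exactly $2^{-v-2}$ is a combinatorial manifestation of the geometric form of $\nu$, and I do not anticipate any deeper obstacle.
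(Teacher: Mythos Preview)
Your proof is correct. The paper itself defers the argument to the full version and only hints, in the subsequent remark, that it is ``a sort of `miracle' of the geometric distribution''; your computation makes that miracle explicit. The case analysis is right: for $z<0$ and a target $v\geq 0$, the middle branch (with $i=v$, requiring $v\leq -z-1$) and the reset branch (with $j=v+z$, requiring $v\geq -z$) are indeed mutually exclusive and each contributes exactly $2^{-v-2}$, while $v=-1$ cannot arise from either of them since the middle branch gives $\Delta_k=i\geq 0$ and the reset branch gives $\Delta_k=j-z\geq -z\geq 1$.

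One minor tightening: to conclude that the increments are i.i.d., you should condition on the whole past $\mathcal F_{k-1}=\sigma(\bar{\bm W}_j-\bar{\bm W}_u:\,u\leq j\leq k-1)$ rather than just on $\{\bar{\bm Z}^{(u)}_{k-1}=z\}$. This is automatic here because $\Delta_k$ is a deterministic function of $z$ (which is $\mathcal F_{k-1}$-measurable) and the fresh step $(\alpha,\beta)$ (independent of $\mathcal F_{k-1}$), so the conditional law given $\mathcal F_{k-1}$ coincides with the one you computed. With this said, $\Delta_k$ is independent of $\mathcal F_{k-1}\supseteq\sigma(\Delta_{u+1},\ldots,\Delta_{k-1})$ and has the fixed law, which gives the i.i.d.\ conclusion without needing a separate induction.
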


\begin{remark}
	Recall that the increments of a walk of a coalescent-walk process are not always equal to one of the increments of the corresponding walk (see for instance \cref{eq:increments}). The statement of \cref{prop:trajectories_are_rw} is a sort of ``miracle'' of the geometric distribution.
\end{remark}

\begin{proof}[Proof of \cref{thm:coal_con_uncond}]
	The first step in the proof is to establish convergence of the components of the vector on the left-hand side of \cref{thm:coal_con_uncond}.
	By a classical invariance principle, we get that $\bar{\conti W}_n=(\bar{\conti X}_n,\bar{\conti Y}_n)$ converges to $\conti W=(\conti X, \conti Y)$ in distribution. Using \cref{prop:trajectories_are_rw}, and applying again the invariance principle, we get that $(\bar{\conti Z}^{(u)}_n({u+t}))_{t\geq 0}$, converges to a one-dimensional Brownian motion. This gives the marginal convergence thanks to \cref{rem:solution_of_SDE_are_BM}.
	
	\medskip
	
	The second step in the proof is to establish joint convergence. Marginal convergence gives joint tightness, so that by Prokhorov's theorem, to show convergence, one only needs to identify the distribution of all joint subsequential limits. Assume that along a subsequence, we have 
		\begin{equation*}
		\left(\bar{\conti W}_n,\bar{\conti Z}^{( u_1)}_n,\ldots,\bar{\conti Z}^{(u_k)}_n\right) 
		\xrightarrow[n\to\infty]{d}
		\left(\conti W,\tilde {\conti Z_1},\ldots,\tilde {\conti Z_k}\right).
		\end{equation*}
	Using Skorokhod's theorem, we may define all involved variables on the same probability space and assume that the convergence is almost sure. The joint distribution of the right-hand-side is unknown for now, but we will show that for every $1\leq i \leq k$, $\tilde {\conti Z_i} = \conti Z^{(u_i)}$ a.s., which would complete the proof. Recall that $\conti Z^{(u_i)}$ is the strong solution of \cref{eq:SDE}, started at time $u_i$ and driven by $\conti W =(\conti X,\conti Y)$, which exists thanks to \cref{thm:ext_and_uni}.	
	Let us now fix $i$ and abbreviate $u=u_i$,  $\tilde {\conti Z} = \tilde {\conti Z_i}$. Our goal is to show that $\tilde {\conti Z}$ also verifies \cref{eq:SDE} and apply path-wise uniqueness.	
		
	Let $\mathcal F_t = \sigma(\conti W(s),\tilde{\conti Z}(s), s\leq t)$. This gives a filtration for which $\conti W$ and $\tilde{\conti Z}$ are adapted.
	We will show that $\conti W$ is an $(\mathcal F_t)_t$-Brownian motion, that is for $t\in \R, s\geq 0$, 
	$(\conti W(t+s)-\conti W(t)) \indep \mathcal F_t$. 
	For fixed $n$, by definition of a random walk, $\bar{\conti W}_n({t+s}) - \bar{\conti W}_n(t)$ is independent from $\sigma(\bar{\bm W}_k, k\leq \lfloor n t \rfloor)$. Therefore, by the definition given in \cref{eq:increments},
	\begin{equation}
	\left(\bar{\conti W}_n({t+s})-\bar{\conti W}_n(t)\right) \ \indep\ \left(\bar{\conti W}_n(r),\bar{\conti Z}^{(u)}_n(r)\right)_{r\leq n^{-1}\lfloor nt \rfloor}.
	\end{equation}
	By convergence, we obtain that $\conti W(t+s)-\conti W(t)$ is independent from $\left(\conti W(r),\tilde{\conti Z}(r)\right)_{r\leq t}$, completing the claim that $\conti W$ is an $(\mathcal F_t)_t$-Brownian motion.
	
	Now fix a rational $\eps>0$ and a rational $t>u$ such that $\tilde{\conti Z}(t)>\eps$. There is $\delta>0$ so that $\tilde{\conti Z}>\eps/2$ on $[t-\delta,t+\delta]$. By almost sure convergence, there is $N_0$ such that for $n\geq N_0$, $\bar{\conti Z}^{(u)}_n>\eps/4$ on $[t-\delta,t+\delta]$. 
	On this interval, outside of the event 
	\[\{\sup_{1\leq i  \leq n}  |\bar{\conti Y}_{i} - \bar{\conti Y}_{i-1}|\geq \sqrt{2n}\eps/4\},\]
	$\bar{\conti Z}^{(u)}_n - \bar{\conti Y}_n$ is constant  by construction of the coalescent-walk process. As a result (the probability of the bad event is bounded by $Ce^{-c\sqrt n}$ ), the limit $\tilde{\conti Z} - \conti Y$ is constant too almost surely. We have shown that almost surely $\tilde{\conti Z} - \conti Y$ is locally constant on $\{t:\tilde{\conti Z}(t)>\eps\}$. This translates into the following equality:
	\[\int_{u}^t \idf_{\{\tilde{\conti Z}(r)>\eps\}} d\tilde{\conti Z}(r) = \int_u^t \idf_{\{\tilde{\conti Z}(r)>\eps\}} d\conti Y(r). \]
	The stochastic integrals are well-defined: on the left-hand side by considering the canonical filtration of $\tilde {\conti Z}$, on the right-hand-side by considering $(\mathcal F_t)_t$.	
	The same can be done for negative values, leading to 
	\[\int_u^t \idf_{\{|\tilde{\conti Z}(r)|>\eps\}} d\tilde{\conti Z}(r)  =  \int_{u}^t \idf_{\{\tilde{\conti Z}(r)>\eps\}} d\conti Y(r) -  \int_{u}^t \idf_{\{\tilde{\conti Z}(r)<-\eps\}} d\conti X(r). \]
	
	By stochastic dominated convergence theorem \cite[Thm. IV.2.12]{revuz2013continuous}, one can take the limit as $\eps\to 0$,  and obtain 
	\[\int_u^t \idf_{\{\tilde{\conti Z}(r)\neq0\}}d\tilde{\conti Z}(r)  =  \int_{u}^t \idf_{\{\tilde{\conti Z}(r)>0\}} d\conti Y(r) -  \int_{u}^t \idf_{\{\tilde{\conti Z}(r)<0\}} d\conti X(r). \]
	Thanks to the fact that $\tilde{\conti Z}$ is Brownian, $\int_u^t \idf_{\{\tilde{\conti Z}(r)=0\}}d\tilde{\conti Z}(r) = 0$, so that the left-hand side equals $\tilde {\conti Z}(t)$.
	As a result $\tilde {\conti Z}$ verifies \cref{eq:SDE} and we can apply path-wise uniqueness (\cref{thm:ext_and_uni}) to complete  the proof.
\end{proof}

\section{The proof of \cref{thm:baxter_permuton_conv}}\label{sect:perm_conv}

Recall that permuton convergence has been defined in \cref{defn:perm_conv}. We present one its characterizations (which comes from \cite[Theorem 2.5]{bassino2017universal}), expressed in terms of random induced patterns. For $n\in\Z_{>0}$, we denote by $\Perms_n$ the set of permutations of size $n$. Let $1\leq k\leq n$, $\sigma\in \Perms_n$ and $I = \{i_1,\ldots i_k\}$ with $1\leq i_1<\dots<i_k\leq n$. The pattern in $\sigma$ induced by $I$ is the only permutation $\pi\in\Perms_k$ such that the $k$ values $\sigma(i_1),\dots, \sigma(i_k)$ are order isomorphic to $\pi(1),\dots, \pi(k)$. In this case, we write $\pat_I(\sigma)=\pi$.

We also define permutations induced by $k$ points in the square $[0,1]^2$. Take a sequence of $k$ points $(X,Y)=((x_1,y_1),\dots, (x_k,y_k))$ in $[0,1]^2$ in general position, i.e.\ with distinguished $x$ and $y$ coordinates. 
We denote by $(x_{(1)},y_{(1)}),\dots, (x_{(k)},y_{(k)})$ the $x$-reordering of $(X,Y)$,
i.e.\ the unique reordering of the sequence $((x_1,y_1),\dots, (x_k,y_k))$ such that
$x_{(1)}<\cdots<x_{(k)}$.
Then the values $(y_{(1)},\ldots,y_{(k)})$ are in the same
relative order as the values of a unique permutation, that we call the \emph{permutation induced by} $(X,Y)$.

\begin{proposition}\label{prop:perm_charact}
	 Let $\bm{\sigma}_n$ be a random permutation of size $n$, and $\bm I_n^k=\{\bm i^1_n,\dots,\bm i^k_n\}$ be a uniform $k$-element subset of $[n]$, independent of $\bm\sigma_n$. Let  $\bm \mu$ be a random permuton, and denote $\Perm_{k}(\bm\mu)$ the unique permutation\footnote{Note that if $\mu$ is a permuton, then it has uniform marginals and so the $x$ and $y$ coordinates of $k$ points sampled according to $\mu$ are a.s.\ distinct.} induced by $k$ independent points in $[0,1]^{2}$ with common distribution $\bm \mu$ conditionally\footnote{This is possible by considering the new probability space described in \cite[Section 2.1]{bassino2017universal}.} on $\bm \mu$. Then
	 \[\mu_{\bm{\sigma}_n} \stackrel{d}{\to} \bm{\mu} \iff \forall k \in \Z_{>0},\;\forall\pi\in \Perms_k,\quad \P(\pat_{\bm I_n^k}(\bm \sigma_n)=\pi) \to \Prob(\Perm_k(\bm \mu) = \pi).\]
\end{proposition}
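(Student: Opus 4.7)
The plan is to prove the two implications separately, viewing the pattern probabilities as a kind of ``moment sequence'' for a random permuton.

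For the forward direction ($\Rightarrow$), I would first argue that for each fixed $k$ and $\pi\in\Perms_k$, the functional $\Phi_{k,\pi}(\mu)\coloneqq\Prob(\Perm_k(\mu)=\pi)$ is continuous in $\mu$ with respect to the weak topology on $\mathcal M$. Indeed, $\Phi_{k,\pi}(\mu)$ is the $\mu^{\otimes k}$-measure of the Borel set of $k$-tuples in $([0,1]^2)^k$ with induced pattern $\pi$, and the boundary of this set (tuples where two points share an $x$- or $y$-coordinate) is $\mu^{\otimes k}$-null whenever $\mu$ is a permuton, since permutons have uniform and hence atomless marginals. Portmanteau's theorem then yields continuity of $\Phi_{k,\pi}$. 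Next I would check the identity
\[
\E\big[\Phi_{k,\pi}(\mu_{\bm\sigma_n})\big]=\P\big(\pat_{\bm I_n^k}(\bm\sigma_n)=\pi\big)+O(1/n),
\]
with an error uniform in $\bm\sigma_n$. Indeed, sampling $k$ i.i.d.\ points from $\mu_{\bm\sigma_n}$ amounts to drawing $k$ indices from $[n]$ with replacement together with uniform ordinates inside the corresponding cells; with probability $1-O(1/n)$ all the indices are distinct, in which case they form a uniform $k$-subset of $[n]$ and the induced pattern coincides with $\pat_{\bm I_n^k}(\bm\sigma_n)$. Combining this with the assumption $\mu_{\bm\sigma_n}\stackrel{d}{\to}\bm\mu$ and the continuous mapping theorem applied to $\Phi_{k,\pi}$ gives the desired pattern probability convergence.

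For the backward direction ($\Leftarrow$), I would exploit the compactness of $\mathcal M$ noted in the excerpt, so that the sequence $(\mu_{\bm\sigma_n})_n$ is automatically tight. By Prokhorov's theorem it then suffices to show that every distributional subsequential limit equals $\bm\mu$. Take any such limit $\mu_{\bm\sigma_{n_j}}\stackrel{d}{\to}\bm\nu$. Applying the forward direction along the subsequence yields
\[
\Prob(\Perm_k(\bm\nu)=\pi)=\lim_j\P\big(\pat_{\bm I_{n_j}^k}(\bm\sigma_{n_j})=\pi\big)=\Prob(\Perm_k(\bm\mu)=\pi)
\]
for every $k\geq 1$ and every $\pi\in\Perms_k$, and the task reduces to recovering $\bm\nu\stackrel{d}{=}\bm\mu$ from these equalities of marginal expectations.

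The main obstacle is precisely this last uniqueness step: the $\Phi_{k,\pi}$ only give expectations, not higher moments, of functionals of the random permuton. I would resolve this by a disjoint-sampling trick to access products. If $\bm I_n^{k_1}$ and $\bm J_n^{k_2}$ are disjoint uniform subsets of $[n]$ of respective sizes $k_1, k_2$, then $\P\big(\pat_{\bm I_n^{k_1}}(\bm\sigma_n)=\pi_1,\,\pat_{\bm J_n^{k_2}}(\bm\sigma_n)=\pi_2\big)$ can be written as a finite linear combination of pattern probabilities on uniform $(k_1+k_2)$-subsets (by first sampling a $(k_1+k_2)$-subset and then a uniform partition into two pieces). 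Passing to the limit along any subsequence where $\mu_{\bm\sigma_{n_j}}\stackrel{d}{\to}\bm\nu$ yields $\E[\Phi_{k_1,\pi_1}(\bm\nu)\Phi_{k_2,\pi_2}(\bm\nu)]=\E[\Phi_{k_1,\pi_1}(\bm\mu)\Phi_{k_2,\pi_2}(\bm\mu)]$, and iterating this identifies all mixed moments of the family $\{\Phi_{k,\pi}\}$ under $\bm\nu$ and $\bm\mu$. Since the algebra generated by the $\Phi_{k,\pi}$ separates points of $\mathcal M$ (pattern densities determine a deterministic permuton, as shown in \cite{hoppen2013limits}), Stone--Weierstrass on the compact space $\mathcal M$ makes this algebra dense in $C(\mathcal M)$, and agreement of expectations on a dense subset of $C(\mathcal M)$ forces $\bm\nu\stackrel{d}{=}\bm\mu$, completing the proof.
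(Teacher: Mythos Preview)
The paper does not actually prove this proposition: it is quoted verbatim from \cite[Theorem~2.5]{bassino2017universal} and used as a black box, so there is no ``paper's own proof'' to compare against. Your argument is essentially the standard one that underlies that reference, and it is correct.

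One simplification is worth pointing out. You go through the discrete side to establish
\[
\E\big[\Phi_{k_1,\pi_1}(\bm\nu)\,\Phi_{k_2,\pi_2}(\bm\nu)\big]
=\E\big[\Phi_{k_1,\pi_1}(\bm\mu)\,\Phi_{k_2,\pi_2}(\bm\mu)\big],
\]
but in fact for \emph{every} deterministic permuton $\mu$ one has the pointwise identity
$\Phi_{k_1,\pi_1}(\mu)\,\Phi_{k_2,\pi_2}(\mu)=\sum_{\rho\in\Perms_{k_1+k_2}} c_{\pi_1,\pi_2}^{\rho}\,\Phi_{k_1+k_2,\rho}(\mu)$,
where $c_{\pi_1,\pi_2}^{\rho}$ is the probability that a uniform random split of $[k_1+k_2]$ into a $k_1$-set and a $k_2$-set induces the pair $(\pi_1,\pi_2)$ from $\rho$. (Reason: two independent $k_1$- and $k_2$-samples from $\mu$ are a.s.\ disjoint, hence jointly a $(k_1+k_2)$-sample.) Thus the linear span of the $\Phi_{k,\pi}$ is already an algebra, and equality of all $\E[\Phi_{k,\pi}(\cdot)]$ immediately forces equality of expectations on that algebra, with no detour through $\bm\sigma_n$. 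The Stone--Weierstrass conclusion then proceeds exactly as you wrote.
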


We can now prove \cref{thm:baxter_permuton_conv}. First we state a consequence of the fact that $\mu_{\conti Z_e}$ is a permuton and that $\conti Z_e^{(s)}(t)$ are continuous random variables, which allows us to get rid of equalities:
\begin{lemma}\label{lem:boundary}
	Almost surely, for almost every $s<t\in[0,1]$, 
	$\conti Z_e^{(s)}(t)\neq 0$. Then $\conti Z_e^{(s)}(t)>0$ implies $\varphi_{\conti Z_e}(s)<\varphi_{\conti Z_e}(t)$, and  $\conti Z_e^{(s)}(t)<0$ implies $\varphi_{\conti Z_e}(s)>\varphi_{\conti Z_e}(t)$.
\end{lemma}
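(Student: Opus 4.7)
The plan is to establish the two assertions of the lemma separately. The first is the a.s.\ non-vanishing of $\conti Z_e^{(s)}(t)$ for Lebesgue-a.e.\ pair $0<s<t<1$; I would prove it via an absolute-continuity argument combined with Tonelli's theorem. The key input is the standard fact that the restriction of $\conti W_e$ to any compact subinterval of $(0,1)$ is absolutely continuous with respect to the restriction of a two-dimensional Brownian motion with the same covariance $\begin{psmallmatrix}1 &-1/2 \\ -1/2 &1\end{psmallmatrix}$ on that subinterval, which is the kind of statement underlying the proof of \cref{thm:ext_and_uni_excursion} and in the spirit of \cite{MR3342657,duraj2015invariance}. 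Under the unconditioned Brownian law, \cref{rem:solution_of_SDE_are_BM} gives that $\conti Z^{(s)}$ is a Brownian motion started at $0$ at time $s$, so $\conti Z^{(s)}(t)$ is a centered Gaussian of variance $t-s$ and $\Prob(\conti Z^{(s)}(t)=0)=0$ pointwise. Since $\{\conti Z_e^{(s)}(t)=0\}$ is measurable with respect to $\conti W_e|_{[s,t]}$ by adaptedness of the SDE solution, absolute continuity transports the pointwise null-probability statement to the excursion law, for each fixed $(s,t)$. Joint measurability of $(\omega,s,t)\mapsto \conti Z_e^{(s)}(t)$ (guaranteed by \cref{thm:ext_and_uni_excursion}) and Tonelli then promote this to the desired a.s.\ statement about Lebesgue-a.e.\ pair.

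For the second assertion, the plan is to read off the link between the sign of $\conti Z_e^{(s)}(t)$ and the order $\leq_{\conti Z_e}$ directly from \cref{eq:coal_to_perm}: for $s<t$ with $\conti Z_e^{(s)}(t)\neq 0$, exactly one of the first two clauses of the definition is active, and this decides which of $s,t$ is smaller in $\leq_{\conti Z_e}$. Combined with \cref{prop:total_order} and the expression of $\varphi_{\conti Z_e}(u)$ as the Lebesgue measure of the corresponding order down-set, this immediately yields the weak form of both implications by nesting of down-sets. To upgrade to strict inequality, one must show that the order-interval of points strictly between $s$ and $t$ in $\leq_{\conti Z_e}$ carries positive Lebesgue measure. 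I would argue this by exploiting continuity of the flow $x\mapsto \conti Z_e^{(x)}$, inherited from path-wise uniqueness in \cref{thm:ext_and_uni_excursion}: if $\conti Z_e^{(s)}(t)$ has a strict sign, then by continuity of trajectories and of the flow, a neighborhood of starting times $x$ near $s$ (respectively near $t$) produces trajectories $\conti Z_e^{(x)}$ on the correct sides at the relevant times, giving a positive-measure set of intermediate $x$ lying strictly between $s$ and $t$ in $\leq_{\conti Z_e}$.

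The first part is largely routine given the analytic tools in the background. The real obstacle is the strict-inequality step in the second part: \emph{a priori} nothing in the order definition rules out that $\varphi_{\conti Z_e}(s)=\varphi_{\conti Z_e}(t)$ with positive probability for a given pair with $\conti Z_e^{(s)}(t)\neq 0$, and excluding this requires a quantitative continuity statement for the flow of SDE solutions. The natural route is to combine the fact that each individual trajectory is a time-shifted Brownian motion (\cref{rem:solution_of_SDE_are_BM}) with a stability estimate in the starting point; the abstract form of \cref{thm:ext_and_uni_excursion} provides path-wise uniqueness, which one then leverages to propagate the strict sign of $\conti Z_e^{(s)}(t)$ to a positive-measure family of nearby starting times.
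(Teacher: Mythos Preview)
Your treatment of the first assertion is correct and is exactly what the paper has in mind when it says the lemma follows from the fact that $\conti Z_e^{(s)}(t)$ are continuous random variables: absolute continuity plus \cref{rem:solution_of_SDE_are_BM} gives $\Prob_\exc(\conti Z_e^{(s)}(t)=0)=0$ for each fixed $0<s<t<1$, and Tonelli upgrades this to a.e.\ pairs.

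Your plan for the second assertion diverges from the paper and is where the difficulty lies. The paper's argument is much shorter: it uses that $\mu_{\conti Z_e}$ is a permuton (\cref{prop:order_preserved}). Since the second marginal of $\mu_{\conti Z_e}$ is Lebesgue, the pushforward $(\varphi_{\conti Z_e})_*\Leb$ is Lebesgue, hence for a.e.\ value $v$ the level set $\{u:\varphi_{\conti Z_e}(u)=v\}$ has Lebesgue measure zero. By Tonelli again, for a.e.\ pair $(s,t)$ one has $\varphi_{\conti Z_e}(s)\neq\varphi_{\conti Z_e}(t)$. Combined with the weak inequality you already obtained from nesting of order down-sets, this gives the strict inequality immediately, with no need to analyze the flow.

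Your proposed route via continuity of $x\mapsto \conti Z_e^{(x)}$ is not only more involved, it is genuinely problematic as written. The paper does not establish any continuity of the solution in the starting point, and \cref{thm:ext_and_uni_excursion} only gives joint measurability. In fact the paper explicitly warns (in the remark following \cref{rem:solution_of_SDE_are_BM}) that one cannot expect the SDE to hold simultaneously for all $u$, and that exceptional $u$ where uniqueness fails are expected. Path-wise uniqueness alone does not give the kind of flow regularity you would need to propagate a strict sign to a neighborhood of starting times. So the ``real obstacle'' you flag is not just a technicality to be filled in; it is a step that the available tools do not support, and the paper's permuton argument sidesteps it entirely.
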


\begin{proof}[Proof of \cref{thm:baxter_permuton_conv}]
	We reuse here the notation of \cref{thm:discret_coal_conv_to_continuous}. In particular $\bar{\bm W}$ is a $\nu$-random walk and $\bar{\bm Z} = \wcp(\bar{\bm W})$ is the associated coalescent-walk process. Let $\bm \sigma_n = \cpbp(\bar{\bm Z}|_{[n]})$. Let $\mathcal E_n$ denote the event $\{(\bar{\bm W}_t)_{0\leq t\leq {n+1}}  \in \mathcal \Walks^{A,\exc}_{n+2}\}$. By \cref{prop:unif_law} and the fact that the mapping $\cpbp\circ\wcp$ is a size-preserving bijection, conditioned on $\mathcal E_n$, $\bm \sigma_n$ is a uniform Baxter permutation.

	Fix $k\geq 1$ and $\pi \in \Perms_k$. For $n\geq k$, let $\bm I_n=\{\bm i^1_n,\dots,\bm i^k_n\}$ be a uniform $k$-element subset of $[n]$, independent of $\bm \sigma_n$.
	In view of \cref{prop:perm_charact}, to complete the proof, we will show that 
	\[\Prob(\pat_{\bm I_n}(\bm \sigma_n)=\pi\mid \mathcal E_n) \xrightarrow[n\to\infty]{}\P(\Perm_{k}(\mu_{\conti Z_e}) = \pi).\]
	Thanks to \cref{cor:patterns}, we have
	\[\Prob(\pat_{\bm I_n}(\bm \sigma_n )=\pi\mid \mathcal E_n)
	=\Prob\Big( \forall_{1\leq \ell< s\leq k},\,\bar{\bm Z}^{(\bm i^\ell_n)}_{\bm i^s_n}\geq 0 \iff \pi(\ell)>\pi(s)\mid \mathcal E_n\Big).\]
	Let $(\bm u_1,\dots,\bm u_k)$ be the sorted vector of $k$ independent uniform continuous random variables in $[0,1]$. For every $n\geq 1$, one can couple $\bm I_n$ and $(\bm u_1,\dots,\bm u_k)$ so that $\bm i_n^j = \lfloor n \bm u_j \rfloor$ for every $1\leq j \leq k$, with an error of probability $O(1/n)$. As a result, 

	\begin{align}
	\Prob(\pat_{\bm I_n}(\bm \sigma_n )=\pi\mid \mathcal E_n)=&\P\Big(\forall_{1\leq \ell< s\leq k},\,(2n)^{-1/2}\bar{\bm Z}^{(\bm u_\ell)}_{\bm u_s}\geq 0 \iff \pi(\ell)>\pi(s) \mid \mathcal E_n \Big)+ O(1/n) \nonumber\\
	\xrightarrow[n\to\infty]{}&
	\P\Big(\forall_{1\leq \ell< s\leq k},\,\conti Z^{(\bm u_\ell)}_e({\bm u_s})\geq 0 \iff \pi(\ell)>\pi(s)\Big) \nonumber\\
	=& \Prob\left(\forall_{1\leq \ell< s\leq k}, 
	\begin{cases}\pi(\ell)>\pi(s) \implies \varphi_{\conti Z_e}(\bm u_\ell)>\varphi_{\conti Z_e}(\bm u_s)\\
	\pi(\ell)<\pi(s) \implies \varphi_{\conti Z_e}(\bm u_\ell)<\varphi_{\conti Z_e}(\bm u_s)
	\end{cases}
	 \right), \label{eq:goal_proof}
	\end{align}
	where for the limit we used the convergence in distribution of \cref{thm:discret_coal_conv_to_continuous} together with the Portmanteau theorem. Additionally, \Cref{lem:boundary} is used both to take care of the boundary effect in the Portmanteau theorem, and to do the rewriting in the last line.

	In order to finish the proof, it is enough to check that the probability on the right-hand side of \cref{eq:goal_proof} equals $\P(\Perm_{k}(\mu_{\conti Z_e}) = \pi)$. This is clear since by definition of $\Perm_k$ and $\mu_{\conti Z_e}$,  $\Perm_{k}(\mu_{\conti Z_e})$ is the permutation induced by
	$\big((\bm u_1,\varphi_{\conti Z_e}(\bm u_1)),\dots, (\bm u_k,\varphi_{\conti Z_e}(\bm u_k))\big).$
\end{proof}

\section{Simulations of large Baxter permutations}\label{sect:simulations}

The simulations for Baxter permutations presented in the first page of this extended abstract have been obtained in the following way:
\begin{enumerate}
	\item first, we have sampled a uniform random walk of size $n+2$ in the non-negative quadrant starting at $(0,0)$ and ending at $(0,0)$ with increments distribution given by \cref{eq:walk_distrib}. This has been done using a "rejection algorithm": it is enough to sample a walk $W$ starting at $(0,0)$ with increments distribution given by \cref{eq:walk_distrib}, up to the first time it leaves the non-negative quadrant. Then one has to check if the last step inside the non-negative quadrant is at the origin $(0,0)$. When this is the case (otherwise we resample a new walk), the part of the walk $W$ inside the non-negative quadrant, denoted $\tilde W$, is a uniform walk of size $|\tilde W|$ in the non-negative quadrant starting at $(0,0)$ and ending at $(0,0)$ with increments distribution given by \cref{eq:walk_distrib}.
	\item Removing the first and the last step of $\tilde W$, thanks to \cref{prop:unif_law}, we obtained a uniform random walk in $\mathcal W_n$.
	\item Finally, applying the mapping $\cpbp\circ\wcp$ to the walk given by the previous step, we obtained a uniform Baxter permutation of size $n$ (thanks to \cref{thm:keyres}).
\end{enumerate}
Note that our algorithm gives a uniform Baxter permutation of random size.
\bibliography{bibli}

\end{document}